\theoremstyle{plain}
\newtheorem{theorem}{Theorem}[section]
\newtheorem{proposition}[theorem]{Proposition}
\newtheorem{lemma}[theorem]{Lemma}
\newtheorem{corollary}[theorem]{Corollary}
\theoremstyle{definition}
\newtheorem{example}[theorem]{Example}
\newtheorem{remark}[theorem]{Remark}
\numberwithin{equation}{section}
\newcommand{\Q}{\mathbb{Q}}
\newcommand{\N}{\mathbb{N}}
\newcommand{\Z}{\mathbb{Z}}
\newcommand{\set}[2]{\{#1|\ #2\}}
\newcommand{\sub}{\subseteq}
\newcommand{\ord}{\mathrm{ord}}
\newcommand{\gen}[1]{\langle #1\rangle}
\newcommand{\T}{\mathcal{T}}
\newcommand{\U}{\mathcal{U}}
\begin{document}
\title{Notes on additively divisible commutative semirings}

\author[T.~Kepka]{Tom\'{a}\v{s}~Kepka}
\address{Charles University, Faculty of Mathematics and Physics, Department of Algebra \\
Sokolovsk\'{a} 83, 186 75 Prague 8, Czech Republic}
\email{kepka@karlin.mff.cuni.cz}

\author[M.~Korbel\'a\v r]{\textsc{Miroslav Korbel\'a\v r}}
\address{Department of Mathematics and Statistics, Faculty of Science, Masaryk University, Kotl\' a\v rsk\'{a} 2, 611 37 Brno, Czech Republic}
\email{miroslav.korbelar@gmail.com}

\thanks{This work is a part of the research project MSM00210839 financed
by M\v SMT. The first author was supported by the Grant Agency of Czech Republic, \#201/09/0296 and the second author by the project LC 505 of Eduard \v Cech's Center for Algebra and Geometry.}

\keywords{commutative semiring, divisible semigroup, idempotent, torsion}
\subjclass[2010]{16Y60, 20M14}


\begin{abstract}
Commutative semirings with divisible additive semigroup are studied. We show that an additively divisible commutative semiring is idempotent, provided that it is finitely generated and torsion. In case that a one-generated additively divisible semiring posseses no unit, it must contain an ideal of idempotent elements. We also present a series of open questions about finitely generated commutative semirings and their equivalent versions. 
\end{abstract}

\maketitle
\vspace{4ex}


It is well known that a commutative field is finite provided that it is a finitely generated ring. Consequently, no finitely generated commutative ring (whether unitary or not) contains a copy of the field $\Q$ of rational numbers. On the other hand, it seems to be an open problem whether a finitely generated (commutative) semiring $S$ can contain a copy of the semiring (parasemifield) $\Q^{+}$ of positive rationals. Anyway, if $S$ were such a (unitary) semiring with $1_{S}=1_{\Q^{+}}$, then the additive semigroup $S(+)$ should be divisible. So far, all known examples of finitely generated additively divisible commutative semirings are additively idempotent. Hence a natural question arises, whether a finitely generated (commutative) semiring with divisible additive part has to be additively idempotent.

 Analogous questions were studied for semigroups. According to \cite[2.5(iii)]{decomp}, there is a finitely generated non-commutative semigroup with a divisible element that is not idempotent. In this context it is of interest to ask whether there are non-finite but finitely generated divisible semigroups. This is in fact not true. Moreover, there exist infinite but finitely generated divisible (non-commutative) groups (see \cite{guba} and \cite{olshan}).

The present short note initiates a study of (finitely generated) additively divisible commutative semirings. In particular, We show that an additively divisible commutative semiring is idempotent, provided that it is finitely generated and torsion. The one-generated case is treated in more detail and we prove that in the case that a one-generated additively divisible semirings possess no unit, it must contain an ideal of idempotent elements. However, even the one-generated case, in the mentioned conjecture, remains an open problem.
Finally, we present a series of open questions about finitely generated commutative semirings and their equivalent versions.

\section{Preliminaries}

Throughout the paper, all algebraic structures involved (as semigroups, semirings, groups and rings) are assumed to be commutative, but, possibly, without additively and/or multiplicatively neutral elements. Consequently, a \emph{semiring} is a non-empty set equipped with two commutative and associative binary operations, an addition and a multiplication, such that the multiplication distributes over the addition. A semiring $S$ is called a ring if the additive semigroup of $S$ is a group and $S$ is called a \emph{parasemifield} if the multiplicative semigroup of $S$ is a non-trivial group.

We will use the usual notation: $\N$ for the semiring of positive integers, $\N_{0}$ for the semiring of non-negative integers, $\Q^{+}$ for the parasemifield of positive rationals and $\Q$ for the field of rationals.

For all $k,t\in\N$ define a relation $\rho(k,t)=\set{(m,n)\in \N\times\N}{m-n\in\Z t\ \&\ (m\neq n \Rightarrow m,n\geq k)}$. It is well known that the relations $\mathrm{id}_{\N}$ and $\rho(k,t)$, where $k,t\in\N$, are just all congruences of the additive semigroup $\N(+)$ (and of the semiring $\N$ as well).

Let $S$ be a semiring. An element $a\in S$ is said to be \emph{of finite order} if the cyclic subsemigroup $\N a=\set{ka}{k\in\N}$, generated by the element $a$, is finite. We put $\ord(a)=\mathrm{card}(\N a)$ in this case. If $\T(S)$ denotes the set of elements of finite order, than either $\T(S)=\emptyset$ or $\T(S)$ is a \emph{torsion} subsemiring (even an ideal) of $S$ (i.e., every element of $\T(S)$ has finite order).

\begin{lemma}\label{1.2}
Let $\emptyset\neq A$ be subset of a semiring such that $\sup\set{\ord(a)}{a\in A}<\infty$. Then $\sup\set{\ord(b)}{b\in \gen{A}}<\infty$. Moreover, there is $r\in\N$ such that $2rb=rb$ for every $b\in\gen{A}$.
\end{lemma}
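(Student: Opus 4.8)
The plan is to produce a single $r\in\N$ for which $rx$ is an additive idempotent (equivalently $2rx=rx$) for \emph{every} $x\in\gen{A}$; once this is in hand, both assertions follow at once, the order bound being an easy consequence. Accordingly, set $N=\sup\set{\ord(a)}{a\in A}$, which is finite by hypothesis, and put $r=\lcm(1,2,\dots,N)$. Note that $N\mid r$, so in particular $r\geq N$.

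First I would handle a single generator. For $a\in A$ the set $\N a$ is a finite commutative semigroup, hence contains an additive idempotent; this idempotent has the form $ja$ with $1\leq j\leq\ord(a)\leq N$ and $ja+ja=ja$. Since $ja$ is idempotent, $s(ja)=ja$ for every $s\in\N$. Because $j\leq N$ we have $j\mid r$, so writing $r=js$ gives $ra=s(ja)=ja$, and therefore $2ra=ra$. Thus every generator lies in the set $W=\set{x\in S}{2rx=rx}$.

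The key step is to propagate this property through sums and products \emph{with the same} $r$. If $2rx=rx$ and $2ry=ry$, then by commutativity $2r(x+y)=2rx+2ry=rx+ry=r(x+y)$, so $W$ is closed under addition; and by distributivity $2r(xy)=(2rx)y=(rx)y=r(xy)$, so $W$ is closed under multiplication, where only the condition on the single factor $x$ is actually needed. Hence $W$ is a subsemiring containing $A$, so $\gen{A}\sub W$, i.e. $2rb=rb$ for all $b\in\gen{A}$. This establishes the ``moreover'' clause. The multiplicative closure, obtained purely from distributivity, is the real crux of the argument: it is what lets one uniform $r$ survive the passage from $A$ to the generated subsemiring.

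Finally I would read off the first assertion. Fix $b\in\gen{A}$. Since $rb$ is an additive idempotent, a short computation using $rb+rb=rb$ shows $(m+r)b=mb$ for all $m\geq r$, so the sequence $b,2b,3b,\dots$ is periodic from index $r$ on, with period dividing $r$. Consequently $\N b\sub\set{mb}{1\leq m\leq 2r-1}$, whence $\ord(b)\leq 2r-1$ and $\sup\set{\ord(b)}{b\in\gen{A}}\leq 2r-1<\infty$. The only mildly technical point is the base case for one element, where one invokes the existence of an idempotent in the finite semigroup $\N a$ together with the divisibility $j\mid r$; everything else is formal manipulation via commutativity and distributivity.
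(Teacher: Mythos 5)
Your proposal is correct and follows essentially the same route as the paper: both arguments produce a single $r\in\N$ divisible by all integers up to the order bound (the paper takes $r=(m+1)!$, you take $\lcm(1,\dots,N)$), observe that $2ra=ra$ for every generator $a\in A$, propagate this identity to all of $\gen{A}$, and then bound $\ord(b)$ by $2r-1$ using the resulting eventual periodicity of $\N b$. The only difference is one of explicitness: you spell out the propagation step (closure of $\set{x}{2rx=rx}$ under sums via commutativity and under products via distributivity), which the paper's proof asserts without comment, and you phrase the final bound directly instead of through the congruence classification $\rho(k_b,t_b)$ of $\N(+)$.
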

\begin{proof}
Set $m=\sup\set{\ord(a)}{a\in A}\in\N$. We have $\N a\cong \N(+)/\rho(k_{a},t_{a})$, where $k_{a},t_{a}\in\N$, $k_{a}+t_{a}\leq m+1$. Furthermore, $2m_{a}a=m_{a}a$ for some $m_{a}\in\N$, $m_{a}\leq m+1$. Setting $r=(m+1)!$, we get $2rb=rb$ for every $b\in\gen{A}$. Of course, $\N b\cong \N(+)/\rho(k_{b},t_{b})$ and $\ord(b)=k_{b}+t_{b}-1$. Since $2rb=rb$, we have $r\geq k_{b}$ and $t_{b}$ divides $2r-r=r$. Consequently, $k_{b}+t_{b}-1\leq n$ for $n=2r-1\in\N$.
\end{proof}

\begin{lemma}\label{1.3}
Let $a,b\in S$ be such that $ka=la+b$ for some $k,l\in\N$, $k\neq l$. If $\ord(b)$ is finite, then $\ord(a)$ is so.
\end{lemma}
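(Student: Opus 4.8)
The plan is to reduce the whole statement to producing a single relation of the form $pa = qa$ with $p \neq q$. Such a relation immediately forces $a$ to be of finite order: adding $ja$ to both sides yields $(p+j)a = (q+j)a$ for all $j \geq 0$, so the cyclic semigroup $\N a$ is eventually periodic and hence finite; equivalently $\N a \cong \N(+)/\rho(k',t')$ for suitable $k',t'\in\N$, as recalled in the Preliminaries. So the entire task is to manufacture two distinct coefficients of $a$ that give the same element.

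Without loss of generality I would assume $k > l$ and set $d = k - l \in \N$. The key step is to iterate the hypothesis $ka = la + b$ into the family of identities
\[
(k + nd)\,a = ka + nb \qquad (n \geq 0).
\]
The base instance $n=1$ is the computation $(k+d)a = da + ka = da + (la + b) = (d+l)a + b = ka + b$, where I use $d + l = k$. The general case then follows by induction on $n$, the inductive step reusing precisely this identity $ka + da = ka + b$ to absorb the extra summand $da$: from $(k+nd)a = ka + nb$ one gets $(k+(n+1)d)a = (k+nd)a + da = nb + (ka + da) = nb + ka + b = ka + (n+1)b$.

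Finally I would invoke the finiteness of $\ord(b)$. Since $\N b$ is finite, there exist $n_1 \neq n_2$ in $\N$ with $n_1 b = n_2 b$. Substituting these into the iterated relation gives $(k + n_1 d)a = ka + n_1 b = ka + n_2 b = (k + n_2 d)a$, and because $d \geq 1$ and $n_1 \neq n_2$ the coefficients $k + n_1 d$ and $k + n_2 d$ are distinct. This is exactly a relation $pa = qa$ with $p \neq q$, so $a$ has finite order by the opening remark. The argument is essentially computational; the only point demanding care is the bookkeeping in the induction and the observation that distinctness of $n_1, n_2$ survives the affine change $n \mapsto k + nd$, so I do not expect a genuine obstacle here.
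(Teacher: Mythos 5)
Your reduction to producing a single relation $pa=qa$ with $p\neq q$ is sound, and your induction is correct --- but only under the assumption $k>l$, and that assumption is \emph{not} without loss of generality. The hypothesis $ka=la+b$ is not symmetric in $k$ and $l$: no relabelling turns an instance with $k<l$ into one with $k>l$ (iterating the relation only makes the right-hand coefficient grow, e.g.\ $ka=la+b$ gives $ka=(2l-k)a+2b$, never the reversed inequality). The case $k<l$ is non-vacuous --- in any semilattice one has $a=2a+b$ whenever $a+b=a$ --- and in that case your key identity $(k+nd)a=ka+nb$ with $d=k-l$ is meaningless, since $d\notin\N$. So as written your proof covers only half of the statement; this is a genuine gap, though a repairable one.

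The repair: if $k<l$, put $d=l-k\in\N$; then $ka=ka+(da+b)$, and iterating gives $ka=(k+nd)a+nb$ for all $n\geq 1$. Choosing $n_{1}<n_{2}$ with $n_{1}b=n_{2}b$ and adding $(n_{2}-n_{1})d\,a$ to both sides of the identity for $n=n_{1}$ yields
\[
\bigl(k+(n_{2}-n_{1})d\bigr)a=(k+n_{2}d)a+n_{1}b=(k+n_{2}d)a+n_{2}b=ka,
\]
again a relation with two distinct coefficients. Note that the paper's own proof avoids the case distinction altogether: taking $m<n$ with $mb=nb$, it multiplies the hypothesis by $n$ and regroups,
\[
nka=nla+nb=nla+mb=(n-m)la+m(la+b)=\bigl((n-m)l+mk\bigr)a,
\]
and the two coefficients differ by $(n-m)(k-l)\neq 0$ whatever the sign of $k-l$. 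You may either adopt that uniform computation or keep your two-case version, but the case $k<l$ must be addressed explicitly.
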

\begin{proof}
There are $m,n\in\N$ such that $m<n$ and $mb=nb$. Then $nka=nla+nb=nla+mb=(n-m)la+m(la+b)=(n-m)la+mka=((n-m)l+mk)a$. Since $k\neq l$, we see that $(n-m)k\neq (n-m)l$ and $nk\neq(n-m)l+mk$. Consequently, $\ord(a)$ is finite.
\end{proof}

On a semiring $S$ define a relation
 $\sigma_{S}=\set{(a,b)\in S\times S}{(\exists m\in\N)\ ma=mb}$. Clearly, $\sigma_{S}$ is a congruence of $S$ and $\sigma_{S/\sigma_{S}}=id$.

\begin{lemma}\label{1.6}
A semiring $S$ is torsion, provided that the factor-semiring
$S/\sigma_{S}$ is torsion.
\end{lemma}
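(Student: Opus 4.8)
The plan is to verify directly that every element of $S$ has finite order, working one element at a time and simply unwinding the definition of $\sigma_{S}$. Fix an arbitrary $a\in S$; the goal is to show that $\ord(a)<\infty$, i.e.\ that $a\in\T(S)$. The natural first move is to pass to the image $\bar{a}=a/\sigma_{S}$ in the factor-semiring $S/\sigma_{S}$ and exploit the hypothesis there.

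Since $S/\sigma_{S}$ is torsion by assumption, $\bar{a}$ has finite order, so the cyclic subsemigroup $\N\bar{a}$ is finite and there exist $k,l\in\N$ with $k\neq l$ and $k\bar{a}=l\bar{a}$. Translated back to $S$, this says precisely that $(ka,la)\in\sigma_{S}$. Now I would invoke the definition of $\sigma_{S}$: from $(ka,la)\in\sigma_{S}$ one obtains some $m\in\N$ with $m(ka)=m(la)$, that is, $(mk)a=(ml)a$ in $S$. Because $\N$ has no zero divisors and $k\neq l$, we have $mk\neq ml$, so $a$ satisfies a nontrivial relation $pa=qa$ with distinct $p=mk$ and $q=ml$. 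This forces $\N a$ to be finite, hence $\ord(a)<\infty$; as $a$ was arbitrary, $S$ is torsion.

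I expect there to be essentially no obstacle here: the entire content is the observation that the extra witness $m$ provided by membership in $\sigma_{S}$ can be absorbed into the multipliers $k,l$, and that multiplying distinct positive integers by $m$ preserves distinctness. The only point requiring any care is this last cancellation in $\N$, which guarantees that the relation $(mk)a=(ml)a$ is genuinely nontrivial. (One could alternatively reformulate the relation in $S/\sigma_{S}$ as $ka=la+b$ with $b$ of finite order and feed it into Lemma~\ref{1.3}, but the direct argument above is shorter and avoids producing the auxiliary element $b$.)
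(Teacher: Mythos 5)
Your proposal is correct and follows essentially the same route as the paper's proof: pass to the image of $a$ in $S/\sigma_{S}$ to get $(ka,la)\in\sigma_{S}$ with $k\neq l$, extract the witness $m$ from the definition of $\sigma_{S}$, and observe that $mka=mla$ with $mk\neq ml$ forces $\ord(a)<\infty$. No discrepancies worth noting.
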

\begin{proof}
For every $a\in S$ there are $k,l\in\N$ such that
$(ka,la)\in\sigma_{S}$, $k<l$. Furthermore, there is $m\in\N$
with $mka=mla$. Clearly, $mk<ml$, and hence $\ord(a)$ is
finite.
\end{proof}

Let $S$ be a semiring and $o\notin S$ be a new element. Putting $x+o=o+x=x$ and $oo=xo=ox=o$ we get again a semiring  $S\cup\{o\}$. Consider now the semiring $T=\N_{0}\times (S\cup\{o\})$ equipped with component-wise addition and multiplication given as  $(n,a)(m,b)=(nm,ma+nb+ab)$ for every $n,m\in\N_{0}$ and $a,b\in S\cup\{o\}$. Denote $\U(S)=T\setminus\{(0,o)\}$ the (unitary) subsemiring.

Now, $S$ can always be treated as a natural unitary $\U(S)$-semimodule, with $(n,a)x=nx+ax$ for every $(n,a)\in\U(S)$, $x\in S$. (Here $(n,o)x=nx$ and $(0,a)x=ax$ for $n\in\N$ and $a,x\in S$.)

\begin{lemma}\label{1.9}
Let $S$ be a semiring. If $w\in S$, $a,b,c\in\U(S)w$ and $m\in\N$ are such that $ma=mb$ and $mc=w$, then $a=b$.
\end{lemma}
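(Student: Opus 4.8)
The plan is to use the $\U(S)$-semimodule structure on $S$: from the hypothesis $mc=w$ I will manufacture a scalar that acts as the identity on the whole cyclic submodule $\U(S)w$, and then cancel the factor $m$ in $ma=mb$ by applying a suitable scalar to it.

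Since $c\in\U(S)w$, I first pick $\gamma\in\U(S)$ with $c=\gamma w$, and set $d=m\gamma\in\U(S)$, the $m$-fold sum of $\gamma$ taken in the additive semigroup of $\U(S)$ (legitimate, as $\U(S)$ is a semiring and hence additively closed). Distributivity of the action over addition of scalars gives $dw=(m\gamma)w=m(\gamma w)=mc=w$, so $d$ fixes $w$.

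The key step is to upgrade ``$d$ fixes $w$'' to ``$d$ fixes every element of $\U(S)w$''. For $x=uw$ with $u\in\U(S)$, commutativity of multiplication in $\U(S)$ yields $dx=d(uw)=(du)w=(ud)w=u(dw)=uw=x$. In particular, since $a,b\in\U(S)w$, we get $da=a$ and $db=b$; moving the integer factor across the action, this reads $m(\gamma a)=a$ and $m(\gamma b)=b$. This middle step is the only genuinely delicate point: it is essential that $d$ act trivially on the entire submodule and not merely on the generator $w$, and this is precisely where commutativity of $\U(S)$ (so that $d$ commutes with every scalar) is indispensable.

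It then remains to cancel. Applying the scalar $\gamma$ to the hypothesis $ma=mb$ and using distributivity of the action over addition of module elements, I obtain $\gamma(ma)=m(\gamma a)=a$ and $\gamma(mb)=m(\gamma b)=b$; hence $a=b$. Apart from the commutativity observation above, everything is routine bookkeeping with the semimodule axioms (associativity of the action and its distributivity in each variable), so I expect no further obstacle.
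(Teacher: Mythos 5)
Your proof is correct and uses essentially the same idea as the paper's: both exploit the relation $w=mc=m\gamma w$ together with commutativity of $\U(S)$ to show that multiplying $ma=mb$ by the scalar $\gamma=\alpha_c$ cancels the factor $m$. The only difference is presentational --- you package the computation as ``$d=m\gamma$ acts as the identity on all of $\U(S)w$,'' while the paper runs the same commutation chain inline through $a=\alpha_c ma=\alpha_c mb=b$.
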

\begin{proof}
For every $d\in\U(S)w$, there is $\alpha_{d}\in\U(S)$ with $d=\alpha_{d}w$. Now, $a=\alpha_{a}w=\alpha_{a}mc=\alpha_{c}\alpha_{a}mw=\alpha_{c}ma=\alpha_{c}mb=\alpha_{c}\alpha_{b}mw=\alpha_{b}mc=\alpha_{b}w=b$.
\end{proof}

\section{Additively divisible semirings}

Let $S(=S(+))$ be a semigroup. An element $a\in S$ is called \emph{divisible} (\emph{uniquely divisible}, resp.) if for every $n\in\N$ there exists  $b\in T$ (a unique, resp.) such that $a=nb$.

A semigroup $S$ is called \emph{divisible} (\emph{uniquely divisible}, resp.) if every element of $S$ is divisible (uniquely divisible, resp.). Clearly, $S$ is divisible iff $S=nS$ for every $n\in\N$. The class of divisible semigroups is closed under taking homomorphic images and cartesian products and  contains all divisible groups and all semilattices (i.e., idempotent semigroups).

A semiring is called \emph{additively divisible} (\emph{additively uniquely divisible}, resp.) if its additive part is a divisible semigroup (uniquely divisible semigroup, resp.). The semiring $\Q^{+}$ is additively (uniquely) divisible.

Note, that any semigroup $S(+)$ with an idempotent element $e$ (i.e. $e+e=e$) can always be treated as a semiring with a constant multiplication given as $ab=e$ for every $a,b\in S$.

The following theorem is a consequence of \cite[2.5(i)]{decomp}.

\begin{theorem}\label{2.1}\cite{decomp}
A semigroup $S$ is finitely generated and
divisible if and only if $S$ is a finite semilattice.
\end{theorem}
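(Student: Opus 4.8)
The plan is to prove the two implications separately, with essentially all the work in the ``only if'' direction. For the ``if'' direction, a finite semilattice is trivially finitely generated (being finite), and it is divisible because in an idempotent semigroup $na=a$ for every $n\in\N$, so each element is its own $n$-th part. For the converse, suppose $S$ is generated by $a_{1},\dots,a_{n}$ and is divisible. I would split the argument into three reductions: (i) $S$ is torsion; (ii) a finitely generated torsion semigroup is finite; (iii) a finite divisible semigroup is idempotent, hence a semilattice.

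Steps (ii) and (iii) are the routine ones. For (ii), once $S$ is torsion every generator has finite order, so Lemma~\ref{1.2} applies to $A=\{a_{1},\dots,a_{n}\}$ and yields a common bound $N_{0}$ on $\ord(b)$ for all $b\in\gen{A}=S$; in particular each cyclic semigroup $\N a_{i}$ has at most $N_{0}$ elements, so every element $\sum_{i}c_{i}a_{i}$ of $S$ is determined by the finitely many possible values of its summands $c_{i}a_{i}\in\N a_{i}$, giving $|S|\le (N_{0}+1)^{n}<\infty$. For (iii), set $N=|S|$ and fix $a\in S$; by divisibility choose $b$ with $a=N!\,b$. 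In the finite cyclic semigroup $\N b$ both the index and the period are at most $N$, so $N!$ exceeds the index and is divisible by the period; hence $N!\,b=2N!\,b$, that is, $N!\,b$ is idempotent, and therefore $a=N!\,b=2N!\,b=a+a$. Thus every element of $S$ is idempotent and $S$ is a finite semilattice.

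The crux is step (i), showing that $S$ is torsion, and this is where I expect the main obstacle. My first move would be to pass to the universal (Grothendieck) group $G(S)$ of the semigroup $S(+)$: it is generated by the images of $a_{1},\dots,a_{n}$, hence finitely generated, and it is divisible, since any element $\iota(s)-\iota(t)$ can be written as $m(\iota(s')-\iota(t'))$ using $s=ms'$ and $t=mt'$. A finitely generated divisible abelian group is trivial, so $G(S)=0$; unravelling the universal property, this says that for all $x,y\in S$ there is $w$ with $x+w=y+w$, and applying this to the pair $x,2x$ produces, for every $x\in S$, an element $v$ with $v=v+x$.

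The remaining difficulty is to convert such an \emph{absorption relation} into a genuine finite-order statement. Writing $v=\sum_{i}c_{i}a_{i}$ and taking $x=a_{j}$, the relation reads $c_{j}a_{j}+b=(c_{j}+1)a_{j}+b$ with $b=\sum_{i\neq j}c_{i}a_{i}$; if $b$ were already known to have finite order, Lemma~\ref{1.3} would immediately give $\ord(a_{j})<\infty$, and Lemma~\ref{1.2} would then propagate torsion to all of $S$. The genuine obstacle is precisely the extra summand $b$, i.e.\ the failure of cancellation. To remove it I would induct on the number of generators, using that homomorphic images of divisible semigroups are again divisible: collapsing one generator produces a divisible semigroup on fewer generators to which the inductive hypothesis applies, and Lemma~\ref{1.6} lets me descend torsion through the congruence $\sigma_{S}$. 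This inductive cancellation step is the technical heart of the matter, and it is exactly the content supplied by the semilattice-of-archimedean-components decomposition of \cite{decomp}.
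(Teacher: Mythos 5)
Your ``if'' direction and your steps (ii) and (iii) are correct, and so is the Grothendieck-group computation in step (i): $G(S)$ is finitely generated and divisible, hence trivial, so for every $x\in S$ there is $v$ with $v=v+x$. But the theorem is not actually proved, because the conversion of this absorption relation into torsion --- the only step with real content --- is never carried out. The relation $v=v+x$ is strictly weaker than torsion: the semigroup $\N\cup\{o\}$ with $o$ absorbing has trivial Grothendieck group (indeed $x+o=y+o$ for all $x,y$) yet the element $1$ has infinite order; this semigroup is of course not divisible, but it shows that your reduction has discarded exactly the information you still need, so divisibility must be re-used in an essential way after this point. The proposed repair --- ``collapsing one generator'' and inducting --- is not a proof: you do not say which congruence collapses the generator, why the quotient is generated by fewer elements, and, fatally, how torsion of the quotient would lift back to $S$. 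Lemma~\ref{1.6} lifts torsion only through the specific congruence $\sigma_{S}$, and torsion does not lift through arbitrary congruences (every semigroup has the trivial quotient, which is torsion). Your closing remark that this step ``is exactly the content supplied by \ldots \cite{decomp}'' concedes the point: at the heart of the argument you are invoking the result you set out to prove.

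For comparison, the paper supplies no proof either: it states the theorem as a consequence of \cite[2.5(i)]{decomp}, namely that every divisible element of a finitely generated commutative semigroup is idempotent; finiteness then follows because a semilattice generated by $n$ elements has at most $2^{n}-1$ elements. If you want a self-contained argument avoiding the structure theory of \cite{decomp}, your own step (iii) can be combined with Mal'cev's theorem \cite{malcev} (cited in the last section of the paper): every finitely generated commutative semigroup is residually finite; every finite homomorphic image of $S$ is divisible, hence idempotent by your step (iii); therefore $x$ and $x+x$ cannot be separated in any finite quotient, forcing $x+x=x$ in $S$. Note that this route establishes idempotency \emph{first} and deduces finiteness afterwards, whereas your plan (torsion $\Rightarrow$ finite $\Rightarrow$ idempotent) runs in the order that turns out to be inaccessible.
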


\begin{theorem}\label{2.3}
The following are equivalent for a  commutative semiring $S$:
\begin{enumerate}
\item[(i)] $S$ is  additively idempotent.
\item[(ii)] $S$ is additively divisible and bounded (i.e. $\sup\set{\ord(a)}{a\in S}<\infty$).
\item[(iii)] $S$ is additively uniquely divisible and torsion.
\end{enumerate}
\end{theorem}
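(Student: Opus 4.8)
The plan is to treat condition (i) as the strongest and to show it is equivalent to each of (ii) and (iii) separately. First I would dispose of the two easy implications out of (i). If $S$ is additively idempotent then $2a=a$ gives $na=a$ for all $n\in\N$, so $\N a=\{a\}$ and $\ord(a)=1$; hence $S$ is torsion and bounded, and the identity $a=na$ exhibits $a\in nS$, so $S$ is additively divisible. For uniqueness, any solution $b$ of $a=nb$ satisfies $b=nb=a$ in an idempotent semigroup, so the solution $b=a$ is forced and unique. This yields (i)$\Rightarrow$(ii) and (i)$\Rightarrow$(iii) simultaneously.

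Next I would prove (ii)$\Rightarrow$(i). Applying Lemma \ref{1.2} with $A=S$ (legitimate, since boundedness is exactly $\sup\set{\ord(a)}{a\in S}<\infty$ and $\gen{S}=S$) produces an integer $r\in\N$ with $2rb=rb$ for every $b\in S$. Given $a\in S$, additive divisibility furnishes $b$ with $a=rb$; then $2a=2rb=rb=a$, so $a$ is idempotent. Hence $S$ is additively idempotent.

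The substance lies in (iii)$\Rightarrow$(i). Here I would first observe that unique divisibility makes each multiplication map $\mu_n\colon x\mapsto nx$ injective: if $nx=ny$ then both $x$ and $y$ divide their common value, so uniqueness forces $x=y$. Now fix $a\in S$. Since $S$ is torsion, $\N a$ is a finite cyclic semigroup and therefore contains an idempotent $e=ma$ for some $m\in\N$, with $2e=e$ and hence $ke=e$ for all $k\in\N$. In particular $me=e=ma$, so $\mu_m(a)=\mu_m(e)$, and injectivity of $\mu_m$ gives $a=e$. Thus every $a$ is idempotent and $S$ is additively idempotent.

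The main obstacle is this last implication: one must extract a usable idempotent from each element and then cancel it. Torsion is precisely what guarantees the idempotent $e\in\N a$ (it would fail for $\Q^{+}$, which is uniquely divisible but not torsion), while the uniqueness half of unique divisibility supplies the cancellation $\mu_m(a)=\mu_m(e)\Rightarrow a=e$. I expect the only remaining subtlety to be the bookkeeping for why $e$ can be written as $ma$ and why $ke=e$, both of which follow from the finite-cyclic-semigroup structure recorded through the congruences $\rho(k,t)$ in the Preliminaries.
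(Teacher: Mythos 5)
Your proof is correct and takes essentially the same approach as the paper: (ii)$\Rightarrow$(i) by applying Lemma \ref{1.2} to all of $S$ and then using divisibility to write $a=rb$, and (iii)$\Rightarrow$(i) by extracting an idempotent relation inside the finite cyclic semigroup $\N a$ and cancelling it via the injectivity coming from unique divisibility. The only cosmetic difference is that you cancel $ma=me$ to conclude $a=e$, whereas the paper cancels $k(2a)=ka$ to conclude $2a=a$ directly.
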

\begin{proof}
(i)$\Rightarrow$(ii): Obvious.

(ii)$\Rightarrow$(i): By \ref{1.2} there exists $n\in\N$ such that
$2na=na$ for every $a\in S$. Since
$S$ is divisible, we have $a=nb$, and so $2a=2nb=nb=a$.

(i)$\Rightarrow$(iii): Easy.

(iii)$\Rightarrow$(i): Since $S$ is torsion, for every $a\in S$ there is $k\in\N$ such that $2ka=ka$. Now, $S$ is additively uniquely divisible, hence $k(2a)=ka$ and $2a=a$.
\end{proof}

\begin{corollary}\label{2.4}
Let $S$ be an divisible semiring. Then the factor-semiring $S/\sigma_{S}$ is additively uniquely divisible.

Moreover, if $S$ is a torsion, then $\sigma_{S}$ is just
the smallest congruence of $S$ such that the corresponding
factor-semiring is additively idempotent.
\end{corollary}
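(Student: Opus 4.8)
The plan is to treat the two assertions separately. For the first, I would observe that $S/\sigma_S$ is a homomorphic image of $S$, so its additive semigroup is a homomorphic image of the divisible semigroup $S(+)$; since the class of divisible semigroups is closed under homomorphic images, $S/\sigma_S$ is again additively divisible. To upgrade divisibility to \emph{unique} divisibility I would invoke the identity $\sigma_{S/\sigma_S}=\mathrm{id}$ recorded just after the definition of $\sigma_S$. Writing $R=S/\sigma_S$ and $\bar{\cdot}$ for classes modulo $\sigma_S$, suppose $n\bar b=n\bar c$ in $R$ for some $n\in\N$. Then $(\bar b,\bar c)\in\sigma_R=\mathrm{id}$, whence $\bar b=\bar c$. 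Thus each element of $R$ admits at most one additive $n$-th part, and together with divisibility this is precisely additive unique divisibility.

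For the second assertion I would first verify that $S/\sigma_S$ is additively idempotent, and then establish minimality. Since $S$ is assumed torsion, its homomorphic image $S/\sigma_S$ is torsion as well: if $\N a$ is finite then its image $\N\bar a$ is a quotient of it and hence finite, so every element of $S/\sigma_S$ has finite order. Combining this with the unique divisibility proved above, condition (iii) of Theorem~\ref{2.3} is satisfied by $S/\sigma_S$, and so by the implication (iii)$\Rightarrow$(i) the factor-semiring $S/\sigma_S$ is additively idempotent.

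It then remains to show that $\sigma_S$ is the \emph{smallest} congruence whose factor is additively idempotent. Let $\theta$ be any congruence of $S$ with $S/\theta$ additively idempotent, and now let $\bar{\cdot}$ denote classes modulo $\theta$. The key observation is that additive idempotency forces $k\bar x=\bar x$ for every $k\in\N$, by an immediate induction from $2\bar x=\bar x$. Now take $(a,b)\in\sigma_S$; there is $m\in\N$ with $ma=mb$, hence $m\bar a=m\bar b$ in $S/\theta$. But $m\bar a=\bar a$ and $m\bar b=\bar b$, so $\bar a=\bar b$, i.e. $(a,b)\in\theta$. This gives $\sigma_S\subseteq\theta$, and since $\sigma_S$ itself yields an idempotent factor, it is the least such congruence.

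I expect no serious obstacle here: both parts reduce to bookkeeping with $\sigma_S$ together with the characterization in Theorem~\ref{2.3}. The only point requiring genuine care is the minimality step, where one must use the full strength of additive idempotency (the derived relation $m\bar x=\bar x$, not merely $2\bar x=\bar x$) in order to cancel the common factor $m$ and collapse $\bar a$ and $\bar b$.
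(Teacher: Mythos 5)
Your proof is correct and follows exactly the route the paper intends (the corollary is stated without proof there): divisibility passes to the homomorphic image $S/\sigma_S$, the identity $\sigma_{S/\sigma_S}=\mathrm{id}$ yields uniqueness, Theorem~\ref{2.3}(iii)$\Rightarrow$(i) gives idempotency in the torsion case, and the minimality of $\sigma_S$ is the straightforward cancellation argument you give. No gaps; the care you take with $m\bar x=\bar x$ rather than just $2\bar x=\bar x$ is precisely the right point to check.
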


\begin{corollary}
  Let $S$ be an additively divisible semiring with $\T(S)\neq\emptyset$. Then $\T(S)$ is an additively divisible ideal in $S$.

If, moreover, $S$ is uniquely additively divisible, then  $\T(S)$ is additively idempotent.
\end{corollary}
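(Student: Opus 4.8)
The plan is to lean on two facts already available: that $\T(S)$ is an ideal of $S$ whenever it is nonempty (recorded in the preliminaries), and the equivalences of Theorem~\ref{2.3}. Since the ideal structure is thus given for free, the first assertion reduces to proving that $\T(S)$ is additively divisible, and the second to proving it additively idempotent under unique divisibility.

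The crux is a single observation: a divisor of a torsion element is again torsion. Concretely, I would show that if $a=nb$ with $n\in\N$ and $a\in\T(S)$, then $b\in\T(S)$. Finiteness of $\ord(a)$ gives $p,q\in\N$ with $p<q$ and $pa=qa$; substituting $a=nb$ produces $(pn)b=(qn)b$ with $pn\neq qn$, so the cyclic semigroup $\N b$ has a repetition and is hence finite, i.e. $b\in\T(S)$. Additive divisibility of $\T(S)$ is then immediate: for $a\in\T(S)$ and $n\in\N$, divisibility of $S$ supplies some $b\in S$ with $a=nb$, and the observation forces $b\in\T(S)$. This settles the first assertion.

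For the second assertion, suppose $S$ is uniquely additively divisible. I would first upgrade the previous paragraph to unique divisibility of $\T(S)$: existence of a divisor inside $\T(S)$ is exactly what was just shown, while its uniqueness is inherited from $S$ because $\T(S)\subseteq S$. As every element of $\T(S)$ has finite order, $\T(S)$ is torsion. Thus $\T(S)$ is an additively uniquely divisible torsion semiring, and the implication (iii)$\Rightarrow$(i) of Theorem~\ref{2.3} gives that $\T(S)$ is additively idempotent.

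I expect no serious obstacle here; the only point needing care is the transition from $S$ to the subsemiring $\T(S)$, namely verifying that divisors computed in $S$ actually lie in $\T(S)$. This is what lets $\T(S)$ inherit both divisibility and its uniqueness, and thereby become subject to Theorem~\ref{2.3}.
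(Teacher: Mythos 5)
Your proposal is correct and matches the derivation the paper intends: the corollary is stated without proof, as an immediate consequence of the preliminary fact that $\T(S)$ is an ideal, the observation that a divisor of a torsion element is torsion (your repetition argument $(pn)b=(qn)b$ forcing $\N b$ finite is exactly the structure of cyclic semigroups recorded in the preliminaries), and Theorem~\ref{2.3}(iii)$\Rightarrow$(i). The only remark worth adding is that the second assertion also follows even more directly, without first establishing unique divisibility of $\T(S)$: for $a\in\T(S)$ one has $k(2a)=ka$ for some $k$, and unique divisibility in $S$ itself already gives $2a=a$.
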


\begin{example}\label{2.5}
Consider a non-trivial semilattice $S$ and the quasicyclic
$p$-group $\Z_{p^{\infty}}$. Then the product
$S\times\Z_{p^{\infty}}$ is a torsion divisible semigroup that is
neither a semilattice nor a group.
\end{example}

\section{Additively divisible semirings - continued}

Throughout this section, let $S$ be an additively divisible semiring. The following assertion is easy to verify.

\begin{proposition}\label{3.2}
Assume that the semiring $S$ is generated as an $S$-semimodule by
a subset $A$ such that $\sup\set{\ord(a)}{a\in A}<\infty$. Then $S$ is additively idempotent.
\end{proposition}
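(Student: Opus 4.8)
The plan is to deduce the statement from Theorem~\ref{2.3}. Since $S$ is additively divisible by the standing assumption of this section, it would suffice to prove that $S$ is bounded, that is, $\sup\set{\ord(s)}{s\in S}<\infty$; the implication (ii)$\Rightarrow$(i) of Theorem~\ref{2.3} then gives at once that $S$ is additively idempotent. So the entire problem reduces to producing a uniform bound on the orders of the elements of $S$.

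First I would observe that neither multiplication by an element of $S$ nor taking an $\N$-multiple can enlarge an order. Put $m=\sup\set{\ord(a)}{a\in A}\in\N$. For $u\in S$ and $a\in A$, distributivity yields $k(ua)=u(ka)$ for every $k\in\N$, so $\N(ua)=\set{u(ka)}{k\in\N}$ is the image of the finite set $\N a$; hence $\ord(ua)\le\ord(a)\le m$. Likewise $\N(na)\subseteq\N a$, so $\ord(na)\le m$. Therefore every element of $B=\set{na}{n\in\N,\ a\in A}\cup\set{ua}{u\in S,\ a\in A}$ has order at most $m$.

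Next I would exploit that $A$ generates $S$ as a semimodule: by definition each $s\in S$ is a finite sum $s=\sum_i u_i a_i+\sum_j n_j a_j$ with $u_i\in S$, $n_j\in\N$ and $a_i,a_j\in A$, so $s$ belongs to the additive subsemigroup generated by $B$, and in particular $s\in\gen{B}$. As $\sup\set{\ord(b)}{b\in B}\le m<\infty$, Lemma~\ref{1.2} yields $\sup\set{\ord(b)}{b\in\gen{B}}<\infty$, whence $\sup\set{\ord(s)}{s\in S}<\infty$. This is precisely the boundedness required in the first step, which finishes the argument.

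I expect the only delicate point to be the second step, namely checking that replacing a generator $a\in A$ by an arbitrary multiple $ua$ keeps the order bounded by $m$; this rests entirely on the identity $k(ua)=u(ka)$. The remainder is routine bookkeeping with Lemma~\ref{1.2} and Theorem~\ref{2.3}, in accordance with the paper's comment that the assertion is easy to verify.
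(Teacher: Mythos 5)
Your proposal is correct and takes essentially the same route as the paper's proof: both reduce to boundedness via Lemma~\ref{1.2} and Theorem~\ref{2.3}(ii)$\Rightarrow$(i), using the observation that multiplication by elements of $S$ cannot increase order. The only cosmetic difference is that you work with the explicit set $B$ of products $ua$ and multiples $na$, whereas the paper takes $B$ to be all elements of order at most $m$ and notes it is closed under the $S$-action; the two choices of $B$ play the identical role.
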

\begin{proof}
Set $m=\sup\set{\ord(a)}{a\in A}\in \N$ and put $B=\set{b\in S}{\ord(b)\leq m}$. Then $A\sub B$ and
$sb\in B$ for all $s\in S$ and $b\in B$. Consequently, $\gen{B}=S$
and, by \ref{1.2}, $\sup\set{\ord(b)}{b\in S}<\infty$. Now, it remains to use \ref{2.3}.
\end{proof}

\begin{corollary}\label{3.3}
The semiring $S$ is additively idempotent, provided that it is generated as an $S$-semimodule by a finite set of elements of finite orders.
\end{corollary}

\begin{corollary}\label{3.4}
The semiring $S$ is additively idempotent, provided that it is torsion and finitely generated.
\end{corollary}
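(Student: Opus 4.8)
The plan is to deduce Corollary~\ref{3.4} from Corollary~\ref{3.3} by showing that, in the torsion finitely generated setting, the hypotheses of the latter are automatically met. Concretely, suppose $S$ is torsion and generated as a \emph{semiring} by a finite set $A=\{a_1,\dots,a_n\}$. Every $a_i$ lies in $S=\T(S)$, so each has finite order; hence $\sup\set{\ord(a_i)}{1\le i\le n}<\infty$ trivially, as this is a maximum over a finite set. The one thing I must check is that semiring generators also generate $S$ as an $S$-semimodule, which is the bridge needed to invoke \ref{3.3}.

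First I would unwind the two notions of generation. The subsemiring $\gen{A}$ consists of all elements obtainable from $A$ by finitely many additions and multiplications; the $S$-subsemimodule generated by $A$ consists of all finite sums $\sum_i s_i a_i$ with coefficients $s_i\in\U(S)$ (recalling the natural unitary $\U(S)$-action $(k,s)x=kx+sx$). Since $S$ is generated as a semiring by $A$, every element of $S$ is a finite $\N$-combination of products of the $a_i$, and every such product of length $\ge 2$ lies in $SA\subseteq$ (the submodule), while products of length $1$ are the $a_i$ themselves; the purely additive $\N$-multiples are captured by the $\N_0$-component of $\U(S)$. Thus $A$ generates $S$ as an $S$-semimodule as well, so the hypothesis of \ref{3.3} holds.

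With that bridge in place, \ref{3.3} applies directly: $S$ is generated as an $S$-semimodule by the finite set $A$ of elements of finite order, hence $S$ is additively idempotent, which is the claim. Tracing the logic further back, \ref{3.3} is itself the finite case of \ref{3.2}, whose engine is Lemma~\ref{1.2}: bounded order on the generators propagates to a uniform bound $\sup\set{\ord(b)}{b\in S}<\infty$ on all of $S$, at which point Theorem~\ref{2.3}(ii)$\Rightarrow$(i) forces additive idempotency.

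The only genuine obstacle is the generation-equivalence step, and it is mild: one must be careful that $S$-semimodule generation uses the action of $S$ (or $\U(S)$) rather than of $\N$, so a single semiring generator can spawn the whole multiplicative orbit $Sa_i$ inside the submodule without needing extra generators. I expect no real difficulty, since the set $B=\set{b\in S}{\ord(b)\le m}$ used in the proof of \ref{3.2} is closed under the $S$-action precisely because orders are controlled under multiplication by arbitrary semiring elements; this is exactly what makes the finitely-generated torsion case collapse into the bounded-order framework.
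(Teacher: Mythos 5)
Your proposal is correct and follows exactly the path the paper intends: Corollary~\ref{3.4} is deduced from Corollary~\ref{3.3} (and hence from Proposition~\ref{3.2}, Lemma~\ref{1.2}, and Theorem~\ref{2.3}) by observing that a finite semiring generating set also generates $S$ as an $S$-semimodule, and that torsion makes each of these finitely many generators have finite order, so the supremum of their orders is a finite maximum. Your bridging step (semiring generation $\Rightarrow$ $\U(S)$-semimodule generation) is precisely the routine verification the paper leaves implicit.
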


\begin{remark}\label{3.5}
(i) The zero multiplication ring defined on $\Z_{p^{\infty}}$ is both
additively divisible and additively torsion. Of course, the ring
is neither additively idempotent nor finitely generated. The
(semi)group $\Z_{p^{\infty}}(+)$ is not uniquely divisible.

(ii) Let $R$ be a (non-zero) finitely generated ring (not necessary with unit). Then $R$ has at
least one maximal ideal $I$ and the factor-ring $R/I$ is a finitely
generated simple ring. However, any such a ring is finite and
consequently, $R$ is not additively divisible.
\end{remark}

\begin{remark}\label{3.6}
Assume that $S$ is additively uniquely divisible.

(i) For all $m,n\in\N$ and $a\in S$, there is a uniquely
determined $b\in S$ such that $ma=nb$ and we put $(m/n)a=b$. If
$m_{1},n_{1}\in\N$ and $b_{1}\in S$ are such that
$m/n=m_{1}/n_{1}$ and $m_{1}a=n_{1}b_{1}$, then $k=mn_{1}=m_{1}n$
and $kb_{1}=mm_{1}a=kb$ and $b_{1}=b$. Consequently, we get a
(scalar) multiplication $\Q^{+}\times S\to S$ (one checks easily
that $q(a_{1}+a_{2})=qa_{1}+qa_{2}$,
$(q_{1}+q_{2})a=q_{1}a+q_{2}a$, $q_{1}(q_{2}a)=(q_{1}q_{2})a$ and
$1a=a$ for all $q_{1},q_{2}\in\Q^{+}$ and $a_{1},a_{2},a\in S$)
and $S$ becomes a unitary $\Q^{+}$-semimodule. Furthermore,
$qa_{1}\cdot a_{2}=a_{1}\cdot qa_{2}$ for all $q\in\Q^{+}$ and
$a_{1},a_{2}\in S$, and therefore $S$ is a unitary
$\Q^{+}$-algebra.

(ii) Let $a\in S$ be multiplicatively but not additively
idempotent (i.e., $a^{2}=a\neq 2a$). Put $Q=\Q^{+}a$. Then $Q$ is
a subalgebra of the $\Q^{+}$-algebra $S$ and the mapping
$\varphi:q\mapsto qa$ is a homomorphism of the $\Q^{+}$-algebras
and, of course, of the semirings as well. Since $a\neq 2a$, we
have $\ker(\varphi)\neq\Q^{+}\times\Q^{+}$. But $\Q^{+}$ is a
congruence-simple semiring and it follows that $\ker(\varphi)=id$.
Consequently, $Q\cong\Q^{+}$.

Put $T=Sa$. Then $T$ is an ideal of the $\Q^{+}$-algebra $S$,
$Q\sub T$ (we have $qa=qa\cdot a\in T$) and $a=1_{Q}=1_{T}$ is a
multiplicatively neutral element of $T$. The mapping $s\mapsto sa$, $s\in S$,
is a homomorphism of the $\Q^{+}$-algebras. Consequently, $T$ is
additively uniquely divisible. Furthermore, $T$ is a finitely
generated semiring, provided that $S$ is so.
\end{remark}

\begin{proposition}\label{3.7}
Assume that $1_{S}\in S$. Then:
\begin{enumerate}
\item[(i)] $S$ is additively uniquely divisible.
\item[(ii)] Either $S$ is additively idempotent or $S$ contains a
subsemiring $Q$ such that $Q\cong\Q^{+}$ and $1_{S}=1_{Q}$.
\item[(iii)] If $\ord(1_{S})$ is finite, then $S$ is
additively idempotent.
\end{enumerate}
\end{proposition}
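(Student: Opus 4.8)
The plan is to establish (i) first, because unique divisibility is the engine that drives the whole proposition, and then to obtain (ii) and (iii) as short consequences of the $\Q^{+}$-algebra structure supplied by Remark \ref{3.6}.

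For (i) I would exploit the unit through Lemma \ref{1.9}. First observe that since $1_{S}$ is multiplicatively neutral, every $s\in S$ equals $(0,s)\cdot 1_{S}$, so $\U(S)1_{S}=S$. Fix $n\in\N$; divisibility already gives existence of an $n$-th part, so only uniqueness is at issue. Suppose $nb=nb'$ with $b,b'\in S$, and use divisibility once more to pick $c\in S$ with $nc=1_{S}$. Then $b,b',c$ all lie in $\U(S)1_{S}$, and Lemma \ref{1.9} (applied with $w=1_{S}$ and $m=n$) forces $b=b'$. Concretely this is the one-line computation $b=1_{S}b=(nc)b=c(nb)=c(nb')=(nc)b'=b'$, which shows multiplication by $n$ is injective and hence that $S$ is additively uniquely divisible.

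With (i) in hand, Remark \ref{3.6}(i) turns $S$ into a unitary $\Q^{+}$-algebra, and $1_{S}$ is multiplicatively idempotent. For (ii) I would split on whether $1_{S}$ is additively idempotent. If $2\cdot 1_{S}=1_{S}$, then for every $a\in S$ we get $2a=(2\cdot 1_{S})a=1_{S}a=a$, so $S$ is additively idempotent. If instead $2\cdot 1_{S}\neq 1_{S}$, then $1_{S}$ satisfies the hypothesis $a^{2}=a\neq 2a$ of Remark \ref{3.6}(ii), which directly produces the subsemiring $Q=\Q^{+}1_{S}\cong\Q^{+}$ with $1_{Q}=1_{S}$. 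For (iii) I would argue that finiteness of $\ord(1_{S})$ rules out the second case: finite order yields some $k\in\N$ with $2k\cdot 1_{S}=k\cdot 1_{S}$, that is $k(2\cdot 1_{S})=k(1_{S})$, and cancelling the factor $k$ by the injectivity from (i) gives $2\cdot 1_{S}=1_{S}$, whence $S$ is additively idempotent exactly as in the first case of (ii). (Equivalently, $1_{\Q^{+}}$ has infinite additive order, so the alternative $Q\cong\Q^{+}$ is incompatible with $\ord(1_{S})<\infty$.)

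I do not expect a deep obstacle once the auxiliary results are available; the single point requiring care is the reduction of unique divisibility to the cancellation furnished by Lemma \ref{1.9}, and in particular the observation that $\U(S)1_{S}=S$, which is what makes that lemma applicable to \emph{arbitrary} $b,b'\in S$ rather than to elements of some proper cyclic submodule. Everything after (i) is routine bookkeeping with the scalar $\Q^{+}$-action.
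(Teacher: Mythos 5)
Your proposal is correct and follows essentially the same route as the paper: the paper proves (i) by choosing $s_{m}\in S$ with $1_{S}=ms_{m}$ and cancelling via $a=s_{m}ma=s_{m}mb=b$ (exactly your computation $b=(nc)b=c(nb)=c(nb')=(nc)b'=b'$; invoking Lemma \ref{1.9} is the same argument packaged differently), and then disposes of (ii) and (iii) by appeal to Remark \ref{3.6}, which is precisely the case split on whether $2\cdot 1_{S}=1_{S}$ that you spell out. The only difference is that you make explicit the details the paper leaves as ``clear from \ref{3.6}.''
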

\begin{proof}
For every $m\in\N$, there is $s_{m}\in S$ such that
$1_{S}=ms_{m}$. That is, $s_{m}=(m1_{S})^{-1}$. If $ma=mb$, then
$a=s_{m}ma=s_{m}mb=b$ and we see that $S$ is additively uniquely
divisible. The rest is clear from \ref{3.6}.
\end{proof}

\begin{proposition}\label{3.8}
If the semiring  $S$ is non-trivial and additively cancellative, then $S$ is not finitely generated.
\end{proposition}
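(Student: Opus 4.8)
The plan is to reduce the statement to the ring case, where Remark~\ref{3.5}(ii) already forbids additive divisibility. The bridge is the Grothendieck group of the additive semigroup $S(+)$, which I would turn into a ring containing $S$. Recall that throughout this section $S$ is additively divisible, so both hypotheses (divisible and cancellative) are in force.

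First I would exploit additive cancellativity: a cancellative commutative semigroup embeds into its group of differences $G=(S\times S)/\!\sim$, where $(a,b)\sim(c,d)$ iff $a+d=b+c$, through the map $s\mapsto[s+x,x]$ (which is independent of the chosen $x\in S$). Cancellativity is exactly what makes this map injective. Next I would extend the multiplication of $S$ to $G$. Since distributivity makes the semiring multiplication biadditive in each variable, the natural formula $[a,b]\cdot[c,d]=[ac+bd,\,ad+bc]$ is the only candidate, and I would verify that it is well defined on $\sim$-classes and that commutativity, associativity and distributivity descend from $S$. This makes $G$ a commutative ring and $S\hookrightarrow G$ a semiring embedding. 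I expect this verification to be the only technically fiddly point, though it is entirely routine bookkeeping in the two-coordinate representatives.

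Then I would record two inheritance facts. The ring $G$ is additively divisible: given $a-b\in G$ and $n\in\N$, divisibility of $S$ yields $a=na'$ and $b=nb'$, so $a-b=n(a'-b')$ with $a'-b'\in G$. Moreover, if $S$ is generated as a semiring by $a_1,\dots,a_k$, these same elements generate $G$ as a ring: the subring they generate contains every monomial, hence all of $S$ and $-S$, hence every difference, so it is all of $G$. Thus $G$ is a finitely generated additively divisible commutative ring.

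Finally I would invoke Remark~\ref{3.5}(ii): a non-zero finitely generated ring is never additively divisible. Consequently $G$ must be the zero ring, and since $S$ embeds into $G$ this forces $S$ to have a single element, contradicting non-triviality. Hence a non-trivial additively cancellative $S$ cannot be finitely generated. The conceptual crux is simply recognizing that cancellativity lets one leave the world of semirings for that of rings, after which the already-established finiteness obstruction for finitely generated rings does all the work.
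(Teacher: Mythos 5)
Your proposal is correct and is essentially the paper's own proof: the paper likewise forms the difference ring $R=S-S$ (which your Grothendieck-group construction with the product $[a,b]\cdot[c,d]=[ac+bd,\,ad+bc]$ makes explicit), observes it inherits additive divisibility and finite generation, and then applies Remark~\ref{3.5}(ii). You have merely spelled out the routine well-definedness and embedding details that the paper leaves implicit.
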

\begin{proof}
The difference ring $R=S-S$ of $S$ is additively divisible, and hence it is not finitely generated by \ref{3.5}(ii). Then $S$ is not finitely generated either.
\end{proof}

\section{One-generated additively divisible semirings}

In this section, let $S$ be an additively divisible semiring generated by a single element $w\in S$.

\begin{proposition}\label{4.1}
The semiring $S$ is additively uniquely divisible.
\end{proposition}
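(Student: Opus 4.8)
The plan is to reduce the claim to the \emph{uniqueness} of divisors, since existence is already guaranteed by the standing hypothesis that $S$ is additively divisible. Thus it suffices to show that for each $n\in\N$ the map $b\mapsto nb$ is injective on $S$, and this is precisely the situation handled by Lemma \ref{1.9}, once $S$ is recognized as a cyclic $\U(S)$-semimodule.

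First I would verify that $S=\U(S)w$. The inclusion $\U(S)w\sub S$ is clear. For the reverse inclusion, note that $w=(1,o)w\in\U(S)w$, so it is enough to check that $\U(S)w$ is a subsemiring of $S$; it then contains $\gen{w}=S$. Closure under addition is immediate, since $(\alpha w)+(\beta w)=(\alpha+\beta)w$. For closure under multiplication the key observation is that $w^{2}=(0,w)w\in\U(S)w$, whence
\[
(\alpha w)(\beta w)=\alpha\beta\,w^{2}=\bigl(\alpha\beta(0,w)\bigr)w\in\U(S)w
\]
for all $\alpha,\beta\in\U(S)$; here one uses that $S$ is a $\U(S)$-algebra, so that the scalars may be pulled through the product. (That $S$ is a $\U(S)$-algebra is a routine component-wise check using the multiplication $(n,a)(m,b)=(nm,ma+nb+ab)$ on $\U(S)$.)

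With $S=\U(S)w$ in hand the argument concludes at once. Suppose $nb=nb'$ for some $n\in\N$ and $b,b'\in S$. Since $S$ is additively divisible, there is $c\in S$ with $nc=w$. Now $b,b',c\in S=\U(S)w$, so Lemma \ref{1.9} (applied with $m=n$, in the roles $a\mapsto b$, $b\mapsto b'$, $c\mapsto c$) gives $b=b'$. Hence the divisor of each element is unique, and $S$ is additively uniquely divisible.

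The only real obstacle is the identification $S=\U(S)w$; everything after that is a direct appeal to Lemma \ref{1.9}. The point deserving care is the multiplicative closure of $\U(S)w$, which rests solely on the fact that $w^{2}$ already lies in $\U(S)w$ together with the $\U(S)$-algebra structure on $S$. The two hypotheses enter in a clean, separated way: additive divisibility supplies the element $c$ with $nc=w$, while the one-generated assumption provides $\gen{w}=S$.
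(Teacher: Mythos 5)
Your proposal is correct and matches the paper's argument: the paper proves Proposition \ref{4.1} by exactly this appeal to Lemma \ref{1.9}, with the identification $S=\U(S)w$ (from $S=\gen{w}$) and the element $c$ with $nc=w$ supplied by additive divisibility left implicit. You have merely filled in the routine verifications the paper omits.
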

\begin{proof}
Follows from \ref{1.9}.
\end{proof}

\begin{proposition}\label{4.2}
The semiring $S$ is additively idempotent, provided that $\ord(w^{m})$ is finite for some $m\in\N$.
\end{proposition}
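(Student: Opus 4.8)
The plan is to reduce the statement, via Proposition~\ref{4.1} and Theorem~\ref{2.3}, to the single assertion that $\ord(w)$ is finite, and then to propagate finiteness of order \emph{downward} from $w^{m}$ to $w$ using additive divisibility together with Lemma~\ref{1.3}. First I would record two easy facts. Since multiplication by $w$ is an additive endomorphism of $S$ mapping $\N w^{j}$ onto $\N w^{j+1}$, finiteness of $\ord(w^{m})$ forces $\ord(w^{j})\leq \ord(w^{m})$ for every $j\geq m$; thus all high powers have finite order, bounded by $N:=\ord(w^{m})$. By Proposition~\ref{4.1} the semiring $S$ is additively uniquely divisible, so the argument of Theorem~\ref{2.3}(iii)$\Rightarrow$(i) applies to each such element: from $2k\,w^{j}=k\,w^{j}$ we get $k(2w^{j})=k\,w^{j}$ and hence $2w^{j}=w^{j}$, i.e. every $w^{j}$ with $j\geq m$ is additively idempotent. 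Moreover, once $\ord(w)$ is known to be finite, Lemma~\ref{1.2} applied to $A=\{w\}$ (which generates $S$) yields $\sup_{b\in S}\ord(b)<\infty$, and then Theorem~\ref{2.3}(ii)$\Rightarrow$(i), or directly Corollary~\ref{3.4}, gives that $S$ is additively idempotent. So everything reduces to showing $\ord(w)<\infty$.

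For the reduction step I would exploit divisibility of the generator. Writing $w=2c$ with $c=\sum_{k\geq1}a_{k}w^{k}\in S$ and substituting gives a ``fixed-point'' identity $w=\sum_{k\geq 1}2a_{k}w^{k}$ in which every coefficient is even; splitting off the terms of exponent $\geq m$ (which form a finite-order element by the bound $N$ and Lemma~\ref{1.2}) yields $w=(2a_{1})w+\sum_{k=2}^{m-1}2a_{k}w^{k}+b$ with $b$ of finite order and linear coefficient $2a_{1}\neq 1$. If the whole middle sum $\sum_{k=2}^{m-1}2a_{k}w^{k}$ happened to be of finite order, Lemma~\ref{1.3} (with $k=1$, $l=2a_{1}$) would immediately give $\ord(w)<\infty$. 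This already settles the case $m=2$, where no middle terms occur, and the case $a_{1}=0$: there $p(x)=\sum_{k}2a_{k}x^{k}$ has minimal degree $\geq 2$, so iterating the genuine equality $w=p(w)=p(p(w))=\cdots$ raises the minimal exponent past $m$ after finitely many steps, whence $w$ equals a sum of powers $w^{k}$ with $k\geq m$, i.e. a finite-order element.

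The genuine obstacle is the ``middle'' range $w^{2},\dots,w^{m-1}$, whose orders are not yet under control and which the divisor $c$ may reintroduce; since one cannot cancel the multiplicative factors $w^{j}$, finiteness cannot be pushed down one exponent at a time. The way I would attack this is to treat the low powers simultaneously: applying divisibility to each $w^{i}$, $1\leq i\leq m-1$, produces a system $w^{i}=\sum_{k=1}^{m-1}A_{ik}w^{k}+b_{i}$ with $b_{i}$ of finite order and $A=(A_{ik})$ a nonnegative integer matrix all of whose entries are even, so that $A=2B$. Iterating the system (legitimate modulo the finite-order terms, which remain of bounded order by Lemma~\ref{1.2}) gives $w^{i}=\sum_{k}(A^{r})_{ik}w^{k}+b_{i}^{(r)}$ for every $r$, where each entry of $A^{r}=2^{r}B^{r}$ is either $0$ or at least $2^{r}$.

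The plan is then to read $A$ as the adjacency matrix of a digraph on $\{1,\dots,m-1\}$. If no cycle is reachable from the node corresponding to $w$, the relevant row of $A^{r}$ vanishes for large $r$ and the identity collapses $w$ to a finite-order element, finishing the proof. The hard remaining case is when a closed strongly connected class is reachable, where I expect either to apply Lemma~\ref{1.3} to a suitable positive combination of the $w^{i}$ over that class (whose transition is governed by an irreducible matrix with Perron value $\geq 2$), or to run an induction on $m$. Controlling this recurrent coupling of the middle powers is the step I expect to be the crux of the whole argument.
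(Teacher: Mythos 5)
Your reduction of the statement to proving $\ord(w)<\infty$ is sound, and the partial cases you settle (all powers $w^{j}$, $j\geq m$, have finite order; the case $m=2$; the case $a_{1}=0$ via degree-raising iteration; the acyclic case of your digraph) are correct. But the proposal is not a proof: at the place you yourself identify as "the crux of the whole argument" -- the case where the middle powers $w^{2},\dots,w^{m-1}$ couple recurrently -- you only state that you "expect" to apply Lemma \ref{1.3} to some positive combination governed by an irreducible matrix, or to run an induction on $m$. Nothing in the paper (or in your sketch) supplies a multivariate analogue of Lemma \ref{1.3}, which handles a single relation $ka=la+b$ with one finite-order perturbation $b$; and iterating the matrix system genuinely never clears the recurrent block, since $A^{r}$ keeps reintroducing the same middle powers with growing coefficients. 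So the designated hard case is an open hole, and the Perron--Frobenius idea is speculation, not argument.

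The gap is avoidable, and the paper's proof dissolves your middle-range obstruction with a single multiplication instead of substitution or iteration. Let $n\geq 2$ be \emph{minimal} with $\ord(w^{n})<\infty$ (it exists and is $\leq m$ by your first observation; if $n=1$, Corollary \ref{3.3} finishes). Write $w=2v$ and expand $v$ as a polynomial in $w$: $v=kw+w^{2}g(w)$ with $k\in\N_{0}$, $g\in\N_{0}[x]$, so that $w=2kw+2w^{2}g(w)$. Now multiply this identity by $w^{n-2}$: every term of degree $\geq 2$ becomes a multiple of $w^{n}$ and hence lies in the torsion ideal $\T(S)$ (this is where "$\T(S)$ is an ideal" does all the work your matrix bookkeeping was trying to do), while the linear term stays linear in $w^{n-1}$. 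The identity reads $w^{n-1}=2kw^{n-1}+b$ with $b=2w^{n}g(w)$ of finite order (or absent). If $k=0$ this makes $\ord(w^{n-1})$ finite outright; if $k\geq 1$, Lemma \ref{1.3} does, since $1\neq 2k$. Either way the minimality of $n$ is contradicted, so $n=1$ and $S$ is additively idempotent. The idea you missed is that finiteness of order need not be pushed down through the coupled system of low powers at all: multiplying the divisibility relation by a high power of $w$ annihilates every nonlinear term at once, so only the linear coefficient survives, and that is exactly the situation Lemma \ref{1.3} was designed for.
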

\begin{proof}
Let $n\in\N$ be the smallest number with $\ord(w^{n})$ finite. If $n=1$, then the result follows from \ref{3.3}, and so we assume, for contrary, that $n\geq 2$. Since $S(+)$ is divisible, there are $v\in S$, $k\in\N_{0}$ and a polynomial $f(x)\in\N_{0}[x]\cdot x$ such that $w=2v$ and $v=kw+w^{2}f(w)$. Hence $w^{n-1}=2kw^{n-1}+2w^{n}f(w)$. By assumption, $2w^{n}f(w)$ is of finite order. If $k=0$, then clearly $\ord(w^{n-1})$ is finite, and if $k\geq 1$, then $\ord(w^{n-1})$ is finite, by \ref{1.3}, the final contradiction.
\end{proof}

For $n\in\N$ denote $S^{n}=\gen{\set{a_{1}\dots a_{n}}{a_{i}\in S}}$.

\begin{lemma}\label{4.3.0}
 If $u\in S$ is such that $w=wu$, then $u=1_{S}$.
\end{lemma}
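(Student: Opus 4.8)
The plan is to exploit the fact that, since $S$ is generated by the single element $w$, every element of $S$ is an $\N_{0}$-linear combination of the positive powers $w, w^{2}, w^{3}, \dots$; that is, each $s\in S$ equals $f(w)$ for some polynomial $f(x)\in\N_{0}[x]\cdot x$. Additive divisibility will play no role here --- the argument uses only that $S$ is one-generated.

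First I would bootstrap the hypothesis $w=wu$ up the powers of $w$. Multiplying $wu=w$ by $w$ gives $w^{2}u=w^{2}$, and by an immediate induction $w^{k}u=w^{k}$ for every $k\in\N$. Thus $u$ acts as a multiplicative identity on each generating power $w^{k}$.

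Then I would extend this to all of $S$ by distributivity. Writing an arbitrary $s\in S$ as $s=\sum_{k=1}^{n}a_{k}w^{k}$ with $a_{k}\in\N_{0}$, one gets $su=\sum_{k}a_{k}(w^{k}u)=\sum_{k}a_{k}w^{k}=s$. Hence $us=s$ for every $s\in S$, so $u$ is a (necessarily unique) multiplicative neutral element of $S$, which is exactly the assertion $u=1_{S}$.

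I do not anticipate a genuine obstacle. The only point deserving a word of justification is the normal form for elements of $S$: since $S$ need not carry a unit, the subsemiring generated by $w$ consists precisely of the polynomial expressions with zero constant term, so every element really is a finite sum $\sum a_{k}w^{k}$ with all exponents $k\geq 1$. This is what makes the extension step go through, because $w^{k}u=w^{k}$ was established only for positive $k$.
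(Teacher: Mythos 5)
Your proof is correct and is essentially the paper's own argument: the paper writes each $a\in S$ as $a=\alpha_{a}w$ with $\alpha_{a}\in\U(S)$, which is just a compact packaging of your decomposition $s=\sum_{k}a_{k}w^{k}$, and concludes $au=\alpha_{a}wu=\alpha_{a}w=a$ exactly as you conclude $su=s$. Both versions rely only on $S$ being generated by $w$ (so that every element ``factors through'' $w$), and neither uses additive divisibility.
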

\begin{proof}
Let $w=wu$. Since $S=\U(S)w$,  for every $a\in S$ there is $\alpha_{a}\in\U(S)$ such that $a=\alpha_{a}w$. Hence $a=\alpha_{a}w=\alpha_{a}wu=au$ for every $a\in S$. Thus $u=1_{S}$.
\end{proof}

\begin{corollary}\label{4.3}
 $1_{S}\in S$ (i.e. $S$ is unitary) if and only if $S^{2}=S$. In this case:
\begin{enumerate}
\item[(i)] There exists $w^{-1}$ in $S$.
\item[(ii)] $S^{n}=S^{m}$ for all $n,m\in\N$.
\end{enumerate}
\end{corollary}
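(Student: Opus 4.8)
The plan is to prove the equivalence first and then read off (i) and (ii) under the extra hypothesis $1_{S}\in S$. The forward implication $1_{S}\in S\Rightarrow S^{2}=S$ is immediate: for any $a\in S$ we have $a=1_{S}a$, which exhibits $a$ as a product of two elements of $S$, so $S\sub S^{2}$, while $S^{2}\sub S$ holds trivially because $S$ is closed under both operations. Hence the whole content sits in the converse, and the natural tool for it is Lemma~\ref{4.3.0}: if I can produce some $c\in S$ with $w=cw$, then $c=1_{S}$ and we are done.

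To obtain such a $c$ I would first describe $S^{2}$ concretely. Since the set of products $\set{ab}{a,b\in S}$ is already closed under multiplication (as $(ab)(cd)=(abc)d$ is again a single product), the subsemiring it generates is just its additive closure, so $S^{2}=\set{\sum_{i=1}^{k}a_{i}b_{i}}{k\in\N,\ a_{i},b_{i}\in S}$. Assuming $S^{2}=S$, the generator $w$ lies in $S^{2}$, so $w=\sum_{i=1}^{k}a_{i}b_{i}$ for suitable $a_{i},b_{i}\in S$. Now I use $S=\U(S)w$ (as recorded in the proof of \ref{4.3.0}) to write $a_{i}=\alpha_{i}w$ and $b_{i}=\beta_{i}w$ with $\alpha_{i},\beta_{i}\in\U(S)$. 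Then $a_{i}b_{i}=\alpha_{i}\beta_{i}w^{2}=(\alpha_{i}\beta_{i}w)w=c_{i}w$, where $c_{i}:=\alpha_{i}\beta_{i}w\in\U(S)w=S$. Summing and using distributivity gives $w=\sum_{i}c_{i}w=(\sum_{i}c_{i})w=cw$ with $c:=\sum_{i}c_{i}\in S$, and Lemma~\ref{4.3.0} yields $c=1_{S}\in S$.

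For part (i), assuming $1_{S}\in S$ I would again invoke $S=\U(S)w$ to write $1_{S}=\alpha w$ for some $\alpha\in\U(S)$ and set $v:=\alpha 1_{S}\in S$. Compatibility of the $\U(S)$-action with the multiplication gives $wv=w(\alpha 1_{S})=\alpha(w1_{S})=\alpha w=1_{S}$, so $v=w^{-1}\in S$. For part (ii) one only needs $1_{S}\in S$: given $a\in S$ and $n\in\N$, the product $a\cdot 1_{S}\cdots 1_{S}$ with $n$ factors equals $a$ and certifies $a\in S^{n}$, whence $S\sub S^{n}\sub S$ and $S^{n}=S$ for every $n\in\N$; in particular $S^{n}=S^{m}$ for all $n,m\in\N$.

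The routine points are the two descriptions of $S^{2}$ and $S^{n}$ and the trivial inclusions. The one step that needs genuine care is the factoring in the converse: one must arrange the auxiliary scalars so that the extracted factor $c=\sum_{i}\alpha_{i}\beta_{i}w$ lands in $S$ rather than merely in $\U(S)$, since Lemma~\ref{4.3.0} applies only to $u\in S$. This is precisely what the identity $\alpha_{i}\beta_{i}w^{2}=(\alpha_{i}\beta_{i}w)w$ accomplishes, and it is the crux of the proof.
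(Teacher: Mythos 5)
Your proof is correct and follows essentially the same route as the paper: the nontrivial direction is settled exactly as in the paper's proof, by rewriting $w\in S^{2}$ in the form $w=cw$ with $c\in S$ and invoking Lemma~\ref{4.3.0}, the only cosmetic difference being that you extract the factor $c$ via the $\U(S)$-action ($a_{i}b_{i}=(\alpha_{i}\beta_{i}w)w$) where the paper uses the polynomial description ($w=wf(w)$ with $f(x)\in\N_{0}[x]\cdot x$ nonzero, so $f(w)\in S$). Your derivations of (i) and (ii) are likewise correct and match the paper's in spirit (the paper gets $w^{-1}$ from $1_{S}\in S=S^{2}$, you get it from $1_{S}\in\U(S)w$; both are immediate).
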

\begin{proof}
Let $S^{2}=S$. Then $w\in S=S^{2}$ and there is a non-zero polynomial $f(x)\in\N_{0}[x]\cdot x$ such that $w=wf(w)$. By \ref{4.3.0}, $f(w)=1_{S}$. Now, since $1_{S}\in S=S^{2}$, we have similarly that $1_{S}=wu$ for some $u\in S$. Hence $w^{-1}=u\in S$. The rest is obvious.
\end{proof}

\begin{lemma}\label{4.4}
Let $k\in\N$, $k\geq 2$ and $u\in S\cup\{0\}$ be such that $w=kw+u$. Then there exists $a\in S\cup\{0\}$ such that  $x=2x+ax$ for every $x\in S$.
\end{lemma}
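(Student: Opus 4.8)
The plan is to avoid trying to realize $a$ as a genuine product and instead to read the hypothesis as a single relation $w=\gamma w$ in the cyclic $\U(S)$-module $S$, then transport it to every element exactly as in the proof of Lemma~\ref{4.3.0}. Concretely, I expect the additive element $a=(k-2)w+u$ itself to do the job, so that no divisibility or order machinery is needed at all.

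First I would rewrite the hypothesis. Since $k\geq 2$, put $c=(k-2)w+u\in S\cup\{0\}$; then $w=kw+u=2w+c$, i.e. $w=(2,c)\,w$, where $(2,c)\in\U(S)$ (and $(2,c)=(2,o)$ when $c=0$). This normalisation is the crucial move: it isolates a coefficient $2$ and pushes everything else into the additive ``defect'' $c$, which is legitimate precisely because $k-2\geq 0$.

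Next I would globalise this relation. Recall that $S=\U(S)w$, so each $x\in S$ can be written as $x=\alpha w$ with $\alpha\in\U(S)$. Using that $\U(S)$ is commutative, $(2,c)\,x=(2,c)\alpha w=\alpha(2,c)w=\alpha w=x$, that is, $x=2x+cx$ for every $x\in S$. Hence $a=c=(k-2)w+u$ has the required property, with $a=0$ (giving $x=2x$) in the degenerate case $k=2$, $u=0$.

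The step I expect to be the genuine obstacle is conceptual rather than computational: the natural first instinct is to produce $a$ with $aw=c$, i.e. to express the additive defect $c$ as an honest product, which amounts to deciding whether $c\in S^{2}=wS$. Without a unit this can fail, and forcing it would drag in divisibility and finite-order arguments that are entirely unnecessary here. The plan sidesteps this by keeping $a$ equal to the additive element $c$ and moving the identity across the module via the commuting $\U(S)$-action. The only points needing care are the equality $S=\U(S)w$ (cyclicity, coming from single generation) and the commutation $\alpha(2,c)=(2,c)\alpha$ in $\U(S)$; both are routine once the $\U(S)$-module viewpoint of Lemma~\ref{4.3.0} is adopted.
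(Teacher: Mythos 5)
Your proof breaks down at its very first step, and the break is fatal. You claim that $w=2w+c$ can be read as $w=(2,c)\,w$ in the $\U(S)$-module $S$. It cannot: by the definition of the action, $(2,c)\,w=2w+cw$, where $cw$ is the semiring \emph{product}, whereas your $c$ enters the hypothesis as an additive \emph{constant}. The two agree only if $cw=c$, which nothing gives you. Equivalently: the map $x\mapsto 2x+cx$ is multiplication by $(2,c)\in\U(S)$ and hence commutes with the $\U(S)$-action (this is what makes the Lemma~\ref{4.3.0}-style globalisation work), but the map $x\mapsto 2x+c$ does not commute with it ($\alpha(2x+c)=2\alpha x+\alpha c\neq 2\alpha x+c$ in general), so knowing that it fixes the generator $w$ tells you nothing about the other elements of $S$. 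The only instance where your identification is legitimate is the degenerate case $u=0$, where $w=kw=(k,o)\,w$ genuinely is a module relation. In all other cases the conclusion $x=2x+cx$ with $a=c=(k-2)w+u$ is unjustified, and there is no reason this particular $a$ should work.

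The obstacle you name in your last paragraph --- expressing the additive defect as an honest product --- is exactly the content of the lemma and cannot be sidestepped; the paper's proof confronts it using the hypotheses you discarded, namely that $S$ is one-generated \emph{as a semiring} and additively divisible. Since $S=\gen{w}$, one can write $u=mw+wf(w)$ with $m\in\N_{0}$ and $f(x)\in\N_{0}[x]\cdot x$: the higher-degree part $wf(w)$ \emph{is} a product of $w$ with the element $b=f(w)\in S$, and the linear part $mw$ is absorbed into the coefficient, giving $w=nw+wb$ with $n=m+k$. Adding $(n-2)w$ yields $(n-1)w=2(n-1)w+wb$, and additive divisibility is then used twice --- to write $b=(n-1)a$ and to write each $x\in S$ as $(n-1)\alpha_{x}w$ with $\alpha_{x}\in\U(S)$ --- which transports the relation to all of $S$: $x=\alpha_{x}(n-1)w=\alpha_{x}\bigl(2(n-1)w+(n-1)wa\bigr)=2x+xa$. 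The degenerate case $f=0$ is handled by the order machinery: then $w=(m+k)w$ with $m+k\geq 2$, so $\ord(w)$ is finite, $S$ is additively idempotent by \ref{3.3}, and $a=0$ works. So both the divisibility and the finite-order arguments you hoped to avoid are genuinely needed.
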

\begin{proof}
Let $u=mw+wf(w)$, where $m\in\N_{0}$ and $f(x)\in\N_{0}[x]\cdot x$. If $f(x)=0$, then $\ord(w)$ is finite, $S$ is idempotent by \ref{3.3} and we can put $a=0$.

 Hence assume that $f(x)\neq 0$. Put $n=m+k$ and $b=f(w)\in S$. We have $w=nw+wb$. Adding $(n-2)w$ to both sides of this equality, we get $(n-1)w=2(n-1)w+wb$. Since $w$ is a generator and $S$ is additively divisible we have $b=(n-1)a$ for some $a\in S$ and for every $x\in S$ there is $\alpha_{x}\in\U(S)$ such that $x=(n-1)\alpha_{x}w$. Hence $x=\alpha_{x}(n-1)w=\alpha_{x}(2(n-1)w+(n-1)wa)=2x+xa$ for every $x\in S$.
\end{proof}

\begin{remark}\label{rem_4}
Let $x,v\in S$ be such that $x=2x+v$. Put $o=x+v$. Then:
\begin{enumerate}
 \item[(i)] $o$ is an idempotent and $x=x+o$.
 \item[(ii)] The set $\set{b\in S}{x=x+b}$ is a subsemigroup of $S(+)$ with $o$ as an absorbing element.
\end{enumerate}
\end{remark}

\begin{theorem}
If  $1_{S}\notin S$, then there exists $a\in S\cup\{0\}$ such that  $\set{u+au}{u\in S}$ is an ideal in $S$ consisting of idempotent elements.
\end{theorem}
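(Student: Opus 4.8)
The plan is to massage the generator relation into the hypothesis of Lemma~\ref{4.4} and then read the ideal off Remark~\ref{rem_4}. Since $S=\gen{w}$ and $1_S\notin S$, every element of $S$ is a polynomial in $w$ with coefficients in $\N_{0}$ and no constant term. Using additive divisibility I would first pick $v\in S$ with $w=2v$ and write $v=kw+w^{2}g(w)$ for some $k\in\N_{0}$ and $g\in\N_{0}[x]$ (collecting the coefficient of $w$ into $k$ and the higher-degree part into $w^{2}g(w)$), so that
\[
w=2v=2kw+2w^{2}g(w).
\]
Since $v$ is a genuine element of $S$, this representation is nonzero, i.e.\ $k\geq 1$ or $g\neq 0$.

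The crux is to force $k\geq 1$, and this is exactly the place where the assumption $1_{S}\notin S$ is used. If $k=0$, then $g\neq 0$ and $v=w^{2}g(w)\in S$, whence $w=2w^{2}g(w)=w\cdot\bigl(2wg(w)\bigr)$ with $2wg(w)\in S$; Lemma~\ref{4.3.0} then forces $2wg(w)=1_{S}$, contradicting $1_{S}\notin S$. (Equivalently, by Corollary~\ref{4.3} the hypothesis $1_{S}\notin S$ says $S^{2}\neq S$, which is precisely what rules out this degree-raising case.) I expect this reduction to be the main obstacle; everything afterward is formal. With $k\geq 1$ the coefficient $2k$ is $\geq 2$, so putting $u=2w^{2}g(w)\in S\cup\{0\}$ we have $w=2kw+u$, and Lemma~\ref{4.4} applies and produces $a\in S\cup\{0\}$ with $x=2x+ax$ for every $x\in S$.

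It then remains to check that $I=\set{u+au}{u\in S}$ is an ideal consisting of idempotents for this $a$. For each $u$ the relation $u=2u+au$ gives, after adding $au$ to both sides, $u+au=2(u+au)$, exactly as in Remark~\ref{rem_4}, so every $o_{u}:=u+au$ is additively idempotent and $I$ consists of idempotent elements. Closure under the two operations is a one-line distributivity computation:
\[
(u+au)+(u'+au')=(u+u')+a(u+u'),\qquad s(u+au)=su+a(su),
\]
for all $u,u',s\in S$; the first identity shows $I$ is an additive subsemigroup and the second that $sI\sub I$. Hence $I$ is an ideal of $S$ consisting of idempotents, as claimed. (The degenerate case $a=0$ is harmless: then $x=2x$ for all $x$, so $S$ is already additively idempotent and $I=S$.)
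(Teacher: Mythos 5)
Your proof is correct and takes essentially the same route as the paper's: write $w=2v$ with $v=kw+w^{2}g(w)$, use Lemma~\ref{4.3.0} together with $1_{S}\notin S$ to rule out $k=0$, feed the resulting relation $w=2kw+u$ (with $2k\geq 2$) into Lemma~\ref{4.4}, and conclude idempotency of the elements $u+au$ via Remark~\ref{rem_4}. Your only additions — the explicit distributivity check that $\set{u+au}{u\in S}$ is closed under addition and multiplication by $S$, and the degenerate case $a=0$ — are details the paper leaves implicit.
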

\begin{proof}
Since $S$ is divisible, there is $f\in\N_{0}[x]$ such that $w=2f(w)$. By \ref{4.3.0}, we get that $f(x)=kx+xg(x)$ for some $k\in\N$, $k\geq 2$ and $g\in\N_{0}[x]$. Now, by \ref{4.4} there is  $a\in S\cup\{0\}$ such that  $u=2u+au$ for every $u\in S$. Finally, $u+au$ is idempotent for every $u\in S$ by \ref{rem_4}.
\end{proof}

Clearly, $S$ is additively divisible, iff for every $n\in\N$ there is a non-zero polynomial $f_{n}(x)\in\N_{0}[x]\cdot x$ such  that $w=nf_{n}(w)$. By \cite[2.5(i)]{decomp}, every divisible element in a finitely generated commutative semigroup has to be idempotent. Assuming now, that the degree of the polynomials $f_{n}$ is bounded, we get that $S$ is additively idempotent. To illustrate some other situations and techniques see the next examples.

\begin{example}
Let $n,m,k,l\in\N$ be such that $n^{l-1}\neq m^{k-1}$ and suppose that $w=nw^{k}$ and $w=mw^{l}$. Then $S$ is additively idempotent.

Indeed, $nm^{k}w^{kl}=n(mw^{l})^{k}=nw^{k}=w=mw^{l}=m(nw^{k})^{l}=mn^{l}w^{kl}$. Since $nm^{k}\neq mn^{l}$, $w^{kl}$ is of finite order and $S$ is additively idempotent by \ref{4.2}.
\end{example}

\begin{example}
Let $w=2(w+w^{2})$ and $w=3(w+w^{3})$. Then $S$ is additively idempotent.

Indeed, adding these two equalities we get $2w=5w+2w^{2}+3w^{3}$. Now, since $w^{2}=2w^{2}+2w^{3}$ we can substitute $2w=5w+(2w^{2}+2w^{3})+w^{3}=5w+w^{2}+w^{3}$. Hence $4w=10w+(2w^{2}+2w^{3})$ and by the same substitution we get $4w=10w+w^{2}$. Finally, $8w=20w+2w^{2}=18w+(2w+2w^{2})=19w$ and $w$ is of finite order. Thus $S$ is additively idempotent by \ref{4.2}.
\end{example}

Even when a one-generated semiring does not have a unit, we can use another one that will contain a unit and will share some properties with the original semiring (see \ref{unit_adding}).

\begin{proposition}\label{unit_adding}
 $T_{S}=\set{\varphi\in\mathrm{End}(S(+))}{(\exists \alpha\in\U(S))(\forall x\in S)\ \varphi(x)=\alpha x}$ is an additively divisible commutative semiring that has a unit $\mathrm{id}_{S}$ and is generated by the set $\{\mathrm{id}_{S},\varphi_{w}\}$, where $\varphi_{w}(x)=wx$.

Moreover, $S$ is additively idempotent if and only if $T_{S}$ is so.
\end{proposition}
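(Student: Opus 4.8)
The plan is to realise $T_S$ as a homomorphic image of $\U(S)$ and read off its structure from there. Consider the evaluation map $\pi\colon\U(S)\to\mathrm{End}(S(+))$ sending $\alpha$ to the endomorphism $\varphi_{\alpha}\colon x\mapsto\alpha x$; each $\varphi_\alpha$ really is an additive endomorphism because $S$ is a unitary $\U(S)$-semimodule. From the module axioms $\pi(\alpha+\beta)=\varphi_{\alpha}+\varphi_{\beta}$ and $\pi(\alpha\beta)=\varphi_{\alpha}\circ\varphi_{\beta}$, so $\pi$ is a homomorphism of semirings whose image is exactly $T_S$. Hence $T_S$ is a subsemiring of $\mathrm{End}(S(+))$, and it is commutative since $\U(S)$ is. The unit is $\mathrm{id}_S=\varphi_{(1,o)}$, while $\varphi_w=\varphi_{(0,w)}$. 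As $S=\gen{w}$, every element of $S$ is $f(w)$ for some $f\in\N_{0}[x]\cdot x$, and one checks that $\U(S)$ is generated as a semiring by $(1,o)$ and $(0,w)$ (using $(0,w)^{k}=(0,w^{k})$ and $(n,f(w))=n(1,o)+(0,f(w))$); applying $\pi$ shows $T_S$ is generated by $\{\mathrm{id}_S,\varphi_w\}$.

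The substantial point is additive divisibility of $T_S$. Fix $n\in\N$. Since $S$ is additively divisible, $w=n\,h(w)$ for some nonzero $h\in\N_{0}[x]\cdot x$. Splitting off the lowest-degree term I would write $h(w)=c_1w+e(w)\,w$ with $c_1\in\N_{0}$ and $e(w)\in S\cup\{o\}$, so that $\beta_n:=(c_1,e(w))$ is a well-defined nonzero element of $\U(S)$ with $\beta_n w=h(w)$. The claim is that $n\varphi_{\beta_n}=\mathrm{id}_S$. Evaluating on a power gives $\varphi_{\beta_n}(w^{k})=(\beta_n w)\,w^{k-1}=w^{k-1}h(w)$, whence $n\varphi_{\beta_n}(w^{k})=w^{k-1}\bigl(n\,h(w)\bigr)=w^{k}$; since every element of $S$ is an $\N$-combination of the $w^{k}$ and both $\varphi_{\beta_n}$ and scaling by $n$ are additive, $n\varphi_{\beta_n}(x)=x$ for all $x$, i.e. $n\varphi_{\beta_n}=\mathrm{id}_S$ (by \ref{4.1} this $n$-th part is even unique, though existence is all that is needed). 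Finally, for arbitrary $\varphi\in T_S$ one has $\varphi_{\beta_n}\varphi\in T_S$ and $n(\varphi_{\beta_n}\varphi)=(n\varphi_{\beta_n})\varphi=\varphi$, so $T_S=nT_S$ and $T_S$ is additively divisible.

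For the last equivalence both implications reduce to the generator $\mathrm{id}_S$. If $S$ is additively idempotent then $2\alpha x=\alpha x$ for every $\alpha$, so $2\varphi_{\alpha}=\varphi_{\alpha}$ and $T_S$ is additively idempotent; conversely, if $T_S$ is additively idempotent then $2\,\mathrm{id}_S=\mathrm{id}_S$, and evaluating at an arbitrary $x\in S$ yields $2x=x$, so $S$ is additively idempotent. I expect the genuine obstacle to be the second paragraph: the naive $n$-th part of $\mathrm{id}_S$ is multiplication by $1/n$, which a priori lives only in the $\Q^{+}$-algebra $S$ (\ref{3.6}) and need not be given by an element of $\U(S)$ when $S$ carries no unit. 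The decomposition $h(w)=c_1w+e(w)\,w$ is exactly what exhibits this scalar as an honest $\beta_n\in\U(S)$, and carrying the bookkeeping through correctly—including the degenerate case $e(w)=o$—is the crux of the argument.
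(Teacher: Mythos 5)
Your proof is correct and takes essentially the same approach as the paper's: both reduce the whole statement to showing that $\mathrm{id}_S$ is an additively divisible element of $T_S$, and both produce its $n$-th part as $\varphi_{\alpha}$ for an $\alpha\in\U(S)$ obtained by writing the $n$-th additive part of $w$ in the form $\alpha w$. Your explicit construction $\beta_n=(c_1,e(w))$ with verification on the monomials $w^k$ is just a concrete rendering of the paper's one-line computation $x=\alpha_x w=\alpha_x n\alpha_a w=n\alpha_a x$, with the remaining "easy" claims (structure, generation, idempotency) spelled out via the evaluation homomorphism $\U(S)\to T_S$.
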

\begin{proof}
We only need to show, that $\mathrm{id}_{S}\in T_{S}$ is an additively divisible element. Let $n\in\N$. Then $w=na$ for some $a\in S$ and  $a=\alpha_{a}w$ for some $\alpha_{a}\in\U(S)$. Now, for every $x\in S$ there is $\alpha_{x}\in\U(S)$ such that $x=\alpha_{x}w$. Hence $x=\alpha_{x}w=\alpha_{x}n\alpha_{a}w=n\alpha_{a}x$. Thus $\mathrm{id}_{S}=n\alpha_{a}$. The rest is easy.
\end{proof}

\section{A few conjectures}

In this last section we present some other open questions that are influenced by our main problem (namely by the conjecture (A) - see below).

\begin{example}\label{8.0}
Given a multiplicative abelian group $G$ and an element $o\notin G$, put $U(G)=G\cup\{o\}$ and define addition and multiplication on $U(G)$ (extending the multiplication on $G$) by $x+y=xo=ox=o$ for all $x,y\in U(G)$. Then $U(G)$ becomes an ideal-simple semiring.
\end{example}

Consider the following statements:

\begin{enumerate}
\item[(A)] Every finitely generated additively divisible semiring is additively idempotent.
\item[(A1)] Every finitely generated additively uniquely divisible semiring is additively idempotent.
\item[(B)] No finitely generated semiring contains a copy of $\Q^{+}$.
\item[(B1)] No finitely generated semiring with a unit element contains a copy of $\Q^{+}$ sharing the unit.
\item[(B2)] Every finitely generated additively divisible semiring with a unit is additively idempotent.
\item[(C)] Every parasemifield, that is finitely generated as a semiring, is additively idempotent.
\item[(C1)] Every finitely generated infinite and ideal-simple semiring is additively idempotent or a copy of the semiring $U(G)$ (see \ref{8.0}) for an infinite finitely generated abelian group $G$.

\end{enumerate}

\begin{proposition}\label{8.1}
$(\mathrm{A})\Leftrightarrow(\mathrm{A1})\Rightarrow(\mathrm{B})\Leftrightarrow(\mathrm{B1})\Leftrightarrow(\mathrm{B2})\Rightarrow(\mathrm{C})\Leftrightarrow(\mathrm{C1})$.
\end{proposition}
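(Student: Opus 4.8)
The plan is to reduce the whole chain to a handful of easy inclusions plus one reusable construction, leaving a single genuinely hard implication. Several arrows are immediate from the definitions: $(\mathrm{A})\Rightarrow(\mathrm{A1})$ and $(\mathrm{B})\Rightarrow(\mathrm{B1})$ hold because unique divisibility is a special case of divisibility and the unital/shared-unit hypotheses merely single out subclasses. The engine for the rest is the following construction. If a finitely generated semiring $S$ contains a copy $Q\cong\Q^{+}$, I set $e=1_{Q}$ and $T=Se$. Since $e^{2}=e$, the map $s\mapsto se$ is a semiring endomorphism of $S$, so $T$ is a homomorphic image of $S$ and hence finitely generated; moreover $e=1_{T}$ and $Q\sub T$ with $1_{T}=1_{Q}$. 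Because $e=n\cdot(1/n)$ in $Q$, every $t\in T$ satisfies $t=et=n\bigl(t\cdot(1/n)\bigr)$, so $T$ is additively divisible; by \ref{3.7}(i) it is then additively uniquely divisible, and it is not additively idempotent since $\Q^{+}$ is not. This one $T$ simultaneously contradicts $(\mathrm{A1})$, $(\mathrm{B1})$ and $(\mathrm{B2})$ whenever such an $S$ exists, which is what powers all the arrows feeding into $(\mathrm{B})$.

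For $(\mathrm{A1})\Rightarrow(\mathrm{A})$ I would take a finitely generated additively divisible $S$, pass to $S/\sigma_{S}$ (finitely generated and additively uniquely divisible by \ref{2.4}), apply $(\mathrm{A1})$ to conclude it is additively idempotent hence torsion, deduce that $S$ is torsion by \ref{1.6}, and finish with \ref{3.4}; the reverse $(\mathrm{A})\Rightarrow(\mathrm{A1})$ is trivial. For $(\mathrm{A1})\Rightarrow(\mathrm{B})$ and $(\mathrm{B1})\Rightarrow(\mathrm{B})$ I invoke the construction above, whose $T$ contradicts $(\mathrm{A1})$ (respectively $(\mathrm{B1})$). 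The cluster $(\mathrm{B1})\Leftrightarrow(\mathrm{B2})$ I would close by two observations: $(\mathrm{B2})\Rightarrow(\mathrm{B1})$ uses that a unital semiring sharing its unit with $\Q^{+}$ is \emph{automatically} additively divisible (its unit $1=n\cdot(1/n)$ is divisible, hence so is every element), so $(\mathrm{B2})$ would force additive idempotency and contradict $Q\cong\Q^{+}$; and $(\mathrm{B1})\Rightarrow(\mathrm{B2})$ uses \ref{3.7}(ii), which says a finitely generated additively divisible unital semiring is either additively idempotent or contains $\Q^{+}$ sharing its unit.

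For $(\mathrm{B2})\Rightarrow(\mathrm{C})$ the key point is that every parasemifield is additively divisible and unital: if $P$ is a parasemifield then $n1_{P}$ is multiplicatively invertible, so $1_{P}=n\,(n1_{P})^{-1}$ is $n$-divisible and hence so is every $a=a\cdot1_{P}$; thus a finitely generated parasemifield falls under $(\mathrm{B2})$. For $(\mathrm{C1})\Rightarrow(\mathrm{C})$ I would note that a parasemifield is ideal-simple (for a nonempty ideal $I$ and $a\in I$ one has $Pa\sub I$, and $p\mapsto pa$ being a bijection forces $Pa=P$), that a finite parasemifield is additively idempotent by \ref{2.3} (it is bounded and divisible), and that $U(G)$ of \ref{8.0} is never a parasemifield since its adjoined element $o$ satisfies $xo=o$ and has no multiplicative inverse; so $(\mathrm{C1})$ applied to an infinite finitely generated parasemifield yields additive idempotency.

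The hard part will be $(\mathrm{C})\Rightarrow(\mathrm{C1})$, the only implication not reachable by the elementary manipulations above. Here one must take a finitely generated infinite ideal-simple semiring that is neither additively idempotent nor isomorphic to any $U(G)$ and extract from it a finitely generated parasemifield that is not additively idempotent, so that $(\mathrm{C})$ delivers a contradiction. This step does not rest on the lemmas of the present paper but on the structure theory of ideal-simple commutative semirings, and I expect essentially all of the real work to sit in invoking that classification; by comparison, the remaining implications are bookkeeping around the construction $T=Se$ and Proposition \ref{3.7}.
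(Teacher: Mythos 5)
Your proposal is correct and, for the core of the chain, follows the same route as the paper: the trivial arrows, the $\sigma_{S}$-quotient argument via \ref{2.4}, \ref{1.6} and \ref{3.4} for $(\mathrm{A1})\Rightarrow(\mathrm{A})$, the ideal $T=Se$ (the paper's $P=S\cdot 1_{Q}$, with the same verification that $s\mapsto se$ is a homomorphism onto a finitely generated unital semiring containing $Q$) as the engine behind $(\mathrm{B1})\Rightarrow(\mathrm{B})$, the automatic-divisibility observation for $(\mathrm{B2})\Rightarrow(\mathrm{B1})$, and Proposition \ref{3.7} to recover $(\mathrm{B2})$; your direct derivation of $(\mathrm{A1})\Rightarrow(\mathrm{B})$ from the engine, versus the paper's routing through $(\mathrm{A})\Rightarrow(\mathrm{B2})$ and the $(\mathrm{B})$-cluster equivalences, is mere bookkeeping. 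The genuine differences sit at the two ends of the chain. For $(\mathrm{B2})\Rightarrow(\mathrm{C})$, the paper appeals to the fact that the prime parasemifield of a parasemifield is trivial or a copy of $\Q^{+}$; your computation $1_{P}=n\,(n1_{P})^{-1}$, using only that $n1_{P}$ is multiplicatively invertible, reaches additive divisibility more elementarily, which is a small gain in self-containedness. For $(\mathrm{C})\Leftrightarrow(\mathrm{C1})$ the paper does no work at all: it cites \cite[5.1]{a note}. You instead prove $(\mathrm{C1})\Rightarrow(\mathrm{C})$ in-house, and correctly so: parasemifields are ideal-simple, $U(G)$ is never a parasemifield since $o$ is multiplicatively absorbing, and the finite case follows from \ref{2.3} because a finite parasemifield is bounded and (by your own argument) additively divisible. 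You then defer $(\mathrm{C})\Rightarrow(\mathrm{C1})$ to the classification of finitely generated ideal-simple commutative semirings; that deferral is not a gap relative to the paper, since it is precisely the content of the result the paper cites, and your diagnosis that all the real work of this last equivalence lives in that structure theory is accurate.
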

\begin{proof}
First, it is clear that $(\mathrm{A})\Rightarrow(\mathrm{A1})$, $(\mathrm{B})\Rightarrow(\mathrm{B1})$  and  $(\mathrm{A})\Rightarrow(\mathrm{B2})$. Furthermore, $(\mathrm{C})\Leftrightarrow(\mathrm{C1})$ by \cite[5.1]{a note}.
Now, assume that (A1) is true and let $S$ be a finitely generated additively divisible semiring. By \ref{2.4}, $S/\sigma_{S}$ is additively uniquely divisible and, of course, this semiring inherits the property of being finitely generated. By (A1), the semiring $S/\sigma_{S}$ is additively idempotent, and hence the semiring $S$ is additively torsion by \ref{1.6}. Finally, $S$ is additively idempotent by \ref{3.4}. We have shown that $(\mathrm{A1})\Rightarrow(\mathrm{A})$ and consequently, $(\mathrm{A})\Leftrightarrow(\mathrm{A1})$.

Next, let (B1) be true and let $S$ be a finitely generated semiring containing a subsemiring $Q\cong\Q^{+}$. Put $P=S\cdot 1_{Q}$. Then $P$ is an ideal of $S$, $1_{Q}=1_{P}$, $Q\sub P$ and the map $s\mapsto s1_{Q}$ is a homomorphism of $S$ onto $P$. Thus $P$ is a finitely generated semiring and this is a contradiction with (B1). We have shown that $(\mathrm{B1})\Rightarrow(\mathrm{B})$ and consequently, $(\mathrm{B})\Leftrightarrow(\mathrm{B1})$.

The implication $(\mathrm{B2})\Rightarrow(\mathrm{B1})$ is easy, since every semiring $S$ with a unit element $1_{S}$ that contains a subsemiring $Q\cong\Q^{+}$ with $1_{S}=1_{\Q^{+}}$ is additively divisible (for $a\in S$ and $m\in\N$ choose $b=(m1_{S})^{-1}a\in S$ and get $a=mb$).

The implication $(\mathrm{B})\Rightarrow(\mathrm{B2})$ follows immediately from \ref{3.7}.

We have shown that $(\mathrm{B})\Rightarrow(\mathrm{B2})\Rightarrow(\mathrm{B1})\Leftrightarrow(\mathrm{B})$. 

Finally, the implication $(\mathrm{B2})\Rightarrow(\mathrm{C})$ follows from the fact that every parasemifield $S$ is additively divisible, since the prime parasemifield of $S$ containg $1_{S}$ is isomorhic either to $\Q^{+}$ or to the trivial semiring.
\end{proof}

The conjecture (C) was confirmed for the one-generated case in \cite{notes} and for the two-generated case in \cite{jezek}.

\

Note that using the Birkhoff's theorem we can consider an equivalent version of the conjecture (A):

\begin{enumerate}
\item[(A')] Every finitely generated subdirectly irreducible additively divisible semiring is additively idempotent.
\end{enumerate}

Of course, it would be sufficient if such a semiring were finite. Unfortunately, this is not true. Assume for instance the semiring $S=V(G)$ with $G=\Z(+)$. By \cite[10.1]{simple}, $S$ is simple and two-generated. Nevertheless, it is an open question whether also one-generated subdirectly irreducible additively divisible semiring can be infinite.

Finally, Mal'cev \cite{malcev} proved that every finitely generated commutative semigroup is residually finite (i.e. it is a subdirect product of finite semigroups). Notice, that also the additive part of a free finitely generated additively idempotent semiring is a residually finite semigroup.  If this is true also for every finitely generated additively divisible semiring, we get a nice positive answer to the conjecture (A).

\bigskip



\end{document}